\definecolor{corlinks}{RGB}{0,0,150}
\definecolor{cormenu}{RGB}{0,0,150}
\definecolor{corurl}{RGB}{0,0,150}
\newtheorem*{notation}{Notation}
\newtheorem{theorem}{Theorem}[section]
\newtheorem{lemma}[theorem]{Lemma}
\newtheorem{definition}[theorem]{Definition}
\newtheorem{corollary}[theorem]{Corollary}
\newtheorem{proposition}[theorem]{Proposition}
\newtheorem{claim}[theorem]{Claim}
\newtheorem*{theorem*}{Theorem}
\newtheorem*{proposition*}{Proposition}
\let\epsilon\varepsilon
\def\eps{\epsilon}
\def\hknp{\mathcal{H}^k(n,p)}
\DeclareMathOperator{\Bin}{Bin}
\newcommand{\ekr}{Erd\H{o}s-Ko-Rado\xspace}
\newcommand{\cF}{\mathcal{F}}
\newcommand{\cB}{\mathcal{B}}
\newcommand{\cH}{\mathcal{H}}
\newcommand{\pr}{\mathbb{P}}
\newcommand{\bG}{G}
\newcommand{\ex}{\mathbb{E}}
\begin{document}

\title{Erd\H{o}s-Ko-Rado for random hypergraphs: asymptotics and stability\\~\\}

\author[1]{Marcelo M. Gauy}
\author[2]{Hi\d{\^{e}}p H\`{a}n}
\author[3]{Igor C. Oliveira}
\affil[1]{\small Institute of Theoretical Computer Science, ETH Z\"{u}rich.\footnote{A significant fraction of this work was completed while the author was at University of S\~{a}o Paulo. Supported by CNPq grants 132238/2012-8 and 248952/2013-7.}}
\affil[2]{\small Instituto de Matem\'{a}ticas, Pontificia Universidad Cat\'olica de Valpara\'iso.\footnote{{The  author was supported by the Millennium Nucleus Information and Coordination in Networks ICM/FIC RC130003, 
the FONDECYT Iniciaci\'on grant  11150913, FAPESP (Proc.~2010/16526-3 and 2103/03447-6), BEPE (2013/11353-1) and PROBRAL CAPES/DAAD Proc. 430/15. 
}}}
\affil[3]{\small Faculty of Mathematics and Physics, Charles University in Prague.\footnote{Parts of this work were completed while the author was at Columbia University and as a visitor at University of S\~{a}o Paulo. Supported in part by CNPq grant 200252/2015-1.}}

\maketitle
\begin{abstract}
We investigate the asymptotic version of the Erd\H{o}s-Ko-Rado theorem for 
the random $k$-uniform hypergraph $\hknp$. For $2 \leq k(n) \leq n/2$,  
let $N=\binom{n}k$ and $D=\binom{n-k}k$. We show that with probability tending to 1 as 
$n\to\infty$, the largest intersecting subhypergraph of 
$\hknp$ has size $(1+o(1))p\frac kn N$, for any 
$p\gg \frac nk\ln^2\!\left(\frac nk\right)D^{-1}$. This lower bound on $p$ is asymptotically best possible for $k=\Theta(n)$. For this range of $k$ and $p$, we are able to show stability as well. 

A different behavior occurs when $k = o(n)$. In this case, the lower bound on $p$ is almost optimal. Further, for the small interval $D^{-1}\ll p \leq (n/k)^{1-\varepsilon}D^{-1}$, the largest intersecting subhypergraph of 
$\hknp$ has size $\Theta(\ln (pD)N D^{-1})$, provided that $k \gg \sqrt{n \ln n}$. 

Together with previous work of Balogh, Bohman and Mubayi, these results settle the asymptotic size of the largest intersecting family in $\hknp$, for essentially all values of $p$ and $k$.
\end{abstract}

\section{Introduction}\label{s:introduction}

The \ekr theorem \cite{EKR}  
is a cornerstone in extremal combinatorics.
Let $[n]$ denote the set $\{1,2,\dots,n\}$, and $\binom{[n]}k$ denote the set of all $k$-element subsets of $[n]$.
A  family of $k$-element sets  $\mathcal{F}\subset \binom{[n]}k$ is called a $k$-uniform hypergraph on the vertex set $[n]$, and such a hypergraph 
is called \emph{intersecting}  if $A \cap B \neq \emptyset$ holds for every edge $A,B \in \mathcal{F}$.
The \ekr theorem then states that for $2\leq k\leq n/2$, an intersecting family $\cF\subset\binom{[n]}k$ must satisfy $|\mathcal{F}| \leq \frac kn\binom{n}{k}$. 
This is best possible, as seen by the \emph{principal} hypergraphs $\cF_i$, 
which consist of all edges containing the fixed element $i\in[n]$.

We investigate a random analogue of the \ekr theorem in which the ambient space $\binom{[n]}k$ in the  theorem is replaced by 
 a random space. Random analogues of extremal results have been studied extensively in the last decades, and we refer to \cite{Schacht,ConlonGowers,BMS,SaxtonThomason} for the history of this line of research and recent breakthroughs.

The ambient random space we will work with is $\hknp$, 
the binomial random $k$-uniform hypergraph on the vertex set $[n]$ in 
which  each edge $e \in \binom{[n]}{k}$ is included in $\hknp$ independently with probability $p$. 
Further,  for a $k$-uniform $\cH$, let $i(\cH)$ denote the size of the largest intersecting subhypergraph of $\cH$, i.e., $i(\cH)=\max\{|\cF|\colon \cF\subset H\text{ and $\cF$ is intersecting}\}$. In this notation the \ekr theorem states that
$i(\cH^k(n,1))=i\left(\binom{[n]}k\right)=\frac kn\binom{n}k$.

\begin{notation}
All asymptotic limits in this paper are taken as $n \rightarrow \infty$. If we write $a(n) \ll b(n)$ or $a(n) = o(b(n))$, it means that $a(n)/b(n) \rightarrow 0$. In particular, the notation $o(1)$ represents a function that goes to $0$ as $n \rightarrow \infty$, as usual. For simplicity, we omit floor and ceiling functions, whenever they are not essential. We say that a sequence of events $\mathcal{E}_n$ holds asymptotically almost surely if $\Pr[\mathcal{E}_n] \to 1$ as $n \to \infty$. By $\ln^d{c}$ we denote  $(\ln c)^d$.
\end{notation}

We will be interested in $i(\hknp)$ for $k=k(n)$ and all $p=p(n)\in (0,1)$. This question was
investigated by Balogh, Bohman and Mubayi \cite{BBM}, which obtained  very precise results 
on the size and on the structure of the largest intersecting family in 
$\hknp$, for 
$k\leq n^{1/2-o(1)}$. 
For larger $k$, they obtained asymptotic tight bounds on $i(\hknp)$, 
however, only for rather large 
values of $p$. 
In general, their result highly depends on the range of $k$ and $p$, and hence it is slightly cumbersome to state. 
Therefore, we will only partially discuss it here, and refer to \cite{BBM} for detailed information. 
Their result concerning the large range of $k$ is given below in Proposition \ref{prop:BBM}.

\begin{proposition}[Proposition 1.3 in \cite{BBM}]
\label{prop:BBM} Let $\delta = \delta(n) > 0$ and 
$N=\binom nk$. If $\,\ln n \ll k < (1 - \delta)n/2$ and $p \gg (1/\delta) ((\ln n) /k)^{1/2}$, then almost surely as $n \rightarrow \infty$\emph{:}
$$
i(\hknp) = (1 + o(1))p (k/n)N.
$$
\end{proposition}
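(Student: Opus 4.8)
I would prove the matching lower and upper bounds separately; all the difficulty is in the upper bound.

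\emph{Lower bound.} Fix a vertex, say $1\in[n]$, and take the principal family $\cF_1=\{e\in\binom{[n]}k:1\in e\}$, which has $|\cF_1|=\binom{n-1}{k-1}=\tfrac kn N$. Then $|\cF_1\cap\hknp|$ is distributed as $\Bin\!\big(\tfrac kn N,\,p\big)$, and since $k\gg\log n$ together with $k<n/2$ make $\binom{n-1}{k-1}$ superpolynomially large while $p\gg\sqrt{(\log n)/k}$ is only polynomially small, the mean $p\tfrac kn N$ tends to infinity. A Chernoff bound then gives $i(\hknp)\ge|\cF_1\cap\hknp|=(1+o(1))p\tfrac kn N$ asymptotically almost surely.

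\emph{Upper bound: structure.} Fix $\eps>0$; it suffices to show that a.a.s.\ no intersecting $\cF\subseteq\hknp$ has $|\cF|\ge(1+\eps)p\tfrac kn N$, and then let $\eps\to0$ in the usual way. I would view $\binom{[n]}k$ as the vertex set of the Kneser graph $K=K(n,k)$ (edges $=$ disjoint pairs), which is $D$-regular with $D=\binom{n-k}k$ and whose independent sets are exactly the intersecting families. The essential input is that $K$ is \emph{supersaturated}: from a stability/removal version of the \ekr theorem one gets $\beta=\beta(\eps,\delta)>0$ such that every $\mathcal{G}\subseteq\binom{[n]}k$ with $|\mathcal{G}|\ge(1+\eps/3)\tfrac kn N$ spans at least $\beta ND$ disjoint pairs. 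Feeding this into the graph container lemma (Kleitman--Winston) produces a family $\mathcal{C}$ of subsets of $\binom{[n]}k$, each of size at most $(1+\eps/3)\tfrac kn N$, covering every intersecting family, with $\log|\mathcal{C}|=O_{\eps,\delta}\!\big(\tfrac{N\log N}{D}\big)$.

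\emph{Upper bound: finishing.} Any intersecting $\cF\subseteq\hknp$ lies in some container $C$, so $|\cF|\le|\hknp\cap C|$; since $\ex|\hknp\cap C|=p|C|\le(1+\eps/3)p\tfrac kn N$, a Chernoff bound gives $\Pr[\,|\hknp\cap C|\ge(1+\eps)p\tfrac kn N\,]\le\exp(-c_\eps\,p\tfrac kn N)$ for a constant $c_\eps>0$ (for much smaller containers the bound is only stronger). A union bound over $\mathcal{C}$ then gives
\[
\Pr\big[\,i(\hknp)\ge(1+\eps)p\tfrac kn N\,\big]\;\le\;|\mathcal{C}|\,\exp\!\big(-c_\eps\,p\tfrac kn N\big)\;\le\;\exp\!\Big(O_{\eps,\delta}\big(\tfrac{N\log N}{D}\big)-c_\eps\,p\tfrac kn N\Big),
\]
which tends to $0$ provided $p\gg\tfrac1{c_\eps}\cdot\tfrac nk\cdot\tfrac{\log N}{D}$. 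One then checks this is implied by $p\gg(1/\delta)\sqrt{(\log n)/k}$: whenever the latter is satisfiable, $D=\binom{n-k}k\ge\big(\tfrac{1+\delta}{1-\delta}\big)^k$ is superpolynomially large (because then $\delta k\gg\log n$), whereas $\log N\le k\log(en/k)$, so $\tfrac nk\cdot\tfrac{\log N}{D}$ is superpolynomially small while $(1/\delta)\sqrt{(\log n)/k}$ is only polynomially small.

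\emph{Main obstacle.} The hard part is the quantitative \ekr supersaturation and threading it through the container lemma: one needs $\beta(\eps,\delta)$ — and hence $\log|\mathcal{C}|$ — under control uniformly for $\log n\ll k<(1-\delta)n/2$, and this is exactly where the $1/\delta$ in the hypothesis originates, namely from the decay of the supersaturation rate as $k/n\to1/2$. (Since $D$ is so large, this route in fact tolerates $p$ well below $(1/\delta)\sqrt{(\log n)/k}$, which is what makes the sharper thresholds obtained elsewhere in this paper possible.) The remaining ingredients — the principal-family lower bound, the Chernoff tail estimates, and the union bound — are entirely routine.
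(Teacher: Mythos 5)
This statement is Proposition~1.3 of Balogh--Bohman--Mubayi, which the paper quotes only for context and does not prove, so there is no in-paper argument to match you against. That said, your route is essentially the one the paper builds (in Sections~2--4) to establish the stronger Theorem~\ref{thm:ekr}(4): spectral supersaturation for the Kneser graph (Lemmas~\ref{lem:hoffman} and~\ref{lem:supersaturation}), a Kleitman--Winston container step (Lemma~\ref{lem:greedyalg} and Corollary~\ref{cor:supersaturation_ind}), and a Chernoff/union bound packaged as Proposition~\ref{prop:transference}(iii). Your closing remark that this tolerates $p$ far below $(1/\delta)\sqrt{(\ln n)/k}$, because $D$ is superpolynomially large, is exactly right and is the point of the paper's improvement.

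One substantive correction: your stated supersaturation, ``every $\mathcal{G}$ with $|\mathcal{G}|\geq (1+\eps/3)\tfrac{k}{n}N$ spans at least $\beta ND$ disjoint pairs'' with $\beta=\beta(\eps,\delta)$, is false when $k=o(n)$. What Hoffman's bound actually gives is $e(\mathcal{G})\geq \gamma(|\mathcal{G}|/N)^2\cdot ND/2$, and at the threshold $|\mathcal{G}|\approx(1+\eps/3)\tfrac{k}{n}N$ this is only $\Theta_\eps\bigl((k/n)^2 ND\bigr)$, which is $o(ND)$ for $k=o(n)$. It is precisely the relative-density form of Definition~\ref{def:supersat}, holding at every scale $|S|\geq\lambda N$, that the Kleitman--Winston peeling uses: each step removes a $\Theta(\gamma D/N)$-fraction of the current set, so $\ell\approx\tfrac{N}{\gamma D}\ln(n/k)$ steps are needed and the fingerprint count is $\binom{N}{\ell}$; as a result $\log|\mathcal{C}|$ carries an extra $\ln(n/k)$ factor beyond your $O(N\log N/D)$ estimate. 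Neither slip hurts the conclusion, since $D\geq\bigl(\tfrac{1+\delta}{1-\delta}\bigr)^k$ is superpolynomial once $p\gg(1/\delta)\sqrt{(\ln n)/k}$ is satisfiable (which forces $\delta^2 k\gg\ln n$, as you note), but they should be stated correctly. Finally, the supersaturation input comes from eigenvalues (Lov\'asz's computation plus Hoffman's bound), not from a ``stability/removal version of EKR''; Friedgut's stability and the Friedgut--Regev removal lemma enter later and only in the proof of Theorem~\ref{thm:stability}.
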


 In other words, for this range of $p$, the expected size
 of the intersection of a principal family $\cF_i$ with $\hknp$  is very close to the size of a 
 maximum intersecting subfamily of $\hknp$.
We extend this result, and provide an almost complete description of $i(\hknp)$ as follows.
\begin{theorem}\label{thm:ekr}
For all $0 < \eps < 1$ there exists a constant $C>0$ for which the following holds.
Let $p = p(n) \in (0,1)$, $k = k(n)$, where $2 \leq k \leq n/2$, $N = \binom{n}{k}$, and $D = \binom{n-k}{k}$. 
Then almost surely as $n \rightarrow \infty$\emph{:}
\begin{enumerate}
 \item[\emph{(1)}]  $i(\hknp)=(1\pm\eps)pN$ \quad \quad \quad $\,\,$ if  $N^{-1} \ll  p  \ll D^{-1}$,
 \item[\emph{(2)}]  $i(\hknp)\geq (1-\epsilon)\frac{N} D\ln(pD)$ \quad  if  $D^{-1} \ll  p\leq (n/k)D^{-1}$ and $\,k \gg \sqrt{n \ln n}$,
 \item[\emph{(3)}]  $i(\hknp)\leq C \frac{N}D\ln(pD)$ \quad \quad \quad if  $D^{-1} \ll  p  \leq (n/k)^{1-\epsilon} D^{-1}$,
 \item[\emph{(4)}]  $i(\hknp)=(1 \pm \eps)p \frac knN$ \quad \quad $\,\,$ if $p \geq C (n/k) \ln^2 (n/k) D^{-1}.$
\end{enumerate}
\end{theorem}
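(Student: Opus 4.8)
The plan is to view $i(\hknp)$ as the independence number of the random induced subgraph $G:=K(n,k)[\hknp]$ of the Kneser graph $K(n,k)$ (vertex set $\binom{[n]}k$, edges between disjoint $k$-sets): intersecting families are exactly independent sets, $K(n,k)$ is $D$-regular on $N$ vertices, and $G$ keeps each vertex independently with probability $p$. I will use freely that a.a.s.\ (Chernoff/Janson) $|\hknp|=(1\pm o(1))pN$, that $G$ has $(1\pm o(1))\tfrac12p^2ND=(1\pm o(1))\tfrac12(pD)\,|\hknp|$ edges, and that the number of triangles of $G$ is $(1\pm o(1))\tfrac16p^3NDD'$ with $D'=\binom{n-2k}{k}$. \textbf{Part (1) and the easy lower bounds.} In (1) the upper bound is just $i(\hknp)\le|\hknp|$, while for the lower bound the expected number of disjoint pairs of edges of $\hknp$ equals $\tfrac12p^2ND=\tfrac12(pD)\cdot pN=o(pN)$, so by Markov a.a.s.\ there are $o(pN)$ such pairs; deleting one edge from each leaves an intersecting subhypergraph of size $(1-o(1))pN$. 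For the lower bound in (4), a.a.s.\ $|\cF_1\cap\hknp|=(1\pm o(1))p\binom{n-1}{k-1}=(1\pm o(1))p\tfrac knN$ by Chernoff, the mean $p\tfrac knN\ge C\ln^2(n/k)\cdot N/D$ tending to infinity.

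\textbf{Parts (2) and (3): the intermediate regime.} Here $G$ has $M=(1\pm o(1))pN$ vertices and average degree $d=(1\pm o(1))pD\to\infty$. The role of the hypothesis $k\gg\sqrt{n\ln n}$ in (2) is that $D'/D=e^{-\Theta(k^2/n)}$ (and $D'=0$ once $k>n/3$), so $p\le(n/k)D^{-1}$ forces the triangle density $pD'\le(n/k)e^{-\Theta(k^2/n)}$, and hence also the expected number of triangles per vertex $\tfrac16(pD)(pD')$, to tend to $0$ exactly when $k^2\gg n\ln n$. Thus a.a.s.\ $G$ is locally sparse; deleting the $o(M)$ vertices lying in triangles and applying the Shearer/Ajtai--Koml\'os--Szemer\'edi independent-set bound for (near-)triangle-free graphs yields an independent set of size $(1-o(1))\tfrac{\ln d}{d}M=(1-o(1))\tfrac{\ln(pD)}{D}N$, which gives (2). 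For the upper bound (3) I would run a first-moment argument: $\pr[\hknp\text{ contains an intersecting }m\text{-set}]\le p^m\,I(n,k,m)$, where $I(n,k,m)$ is the number of intersecting $m$-subsets of $\binom{[n]}k$; bounding $I(n,k,m)$ by the graph-container method for $K(n,k)$ together with the \ekr bound on container sizes, one checks that this tends to $0$ for $m=\lceil C\tfrac ND\ln(pD)\rceil$ with $C$ large, the hypothesis $p\le(n/k)^{1-\eps}D^{-1}$ (rather than merely $(n/k)D^{-1}$) being precisely the slack that makes $1/p$ dominate the per-container factor. For small $k$ one would instead control $I(n,k,m)$ through the structural description of near-maximal intersecting families.

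\textbf{Part (4): the dense regime.} With the lower bound in hand, the task is to show a.a.s.\ every intersecting $\cF\subseteq\hknp$ has $|\cF|\le(1+\eps)p\tfrac knN$. By Chernoff and a union bound over the $n$ vertices, a.a.s.\ $|\cF_v\cap\hknp|\le(1+\eps/2)p\tfrac knN$ for all $v$, which settles families contained in a single star. For a general $\cF$---which we may take to be maximal among intersecting subfamilies of $\hknp$---I would split along a stability dichotomy. If some vertex $v$ lies in almost all edges of $\cF$, then $|\cF|=|\cF\cap\cF_v|+|\cF\setminus\cF_v|$, and the correction $\cF\setminus\cF_v$ is forced to be small because, being intersecting, each of its members must meet \emph{every} member of the large, star-like family $\cF\cap\cF_v$; so $|\cF|\le(1+\eps/2)p\tfrac knN+o(p\tfrac knN)$. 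Otherwise $\cF$ is ``spread''/far from every star, and then a robust (stability) form of \ekr forces $|\cF|\le(1-c)\tfrac knN$ for a fixed $c>0$; such families are captured by few containers, and a first-moment union bound over them tends to $0$ provided $p\ge C(n/k)\ln^2(n/k)D^{-1}$, the $\ln^2(n/k)$ being exactly the density needed for the container count to be beaten by the deviation gap. Together with Proposition~\ref{prop:BBM} for $\ln n\ll k<(1-\delta)n/2$ this covers all $2\le k\le n/2$.

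\textbf{The main obstacle.} The crux is the upper bound in Part~(4), and specifically the ``far from every star'' case: one needs a version of \ekr stability that is simultaneously quantitative (a genuine constant-factor loss in size), uniform over the entire range $2\le k\le n/2$---including $k=\Theta(n)$, where Hilton--Milner-type families are nearly as large as a star and stability is delicate---and coupled to a container count sharp enough that the resulting union bound survives down to the near-optimal density $p\asymp(n/k)\ln^2(n/k)D^{-1}$, far below the threshold $p\gg(\ln n/k)^{1/2}$ of Proposition~\ref{prop:BBM}. Handling in a single argument both $k=\Theta(n)$ (transition at $p\asymp D^{-1}$, stars beating Hilton--Milner families only by a constant factor) and $k=o(n)$ (transition at $p\asymp(n/k)\cdot\mathrm{polylog}\cdot D^{-1}$, far-from-star families genuinely negligible) is the technical heart; obtaining a sufficiently tight count $I(n,k,m)$ for Part~(3), and the sharp local-sparsity input for the lower bound in Part~(2), are secondary hurdles.
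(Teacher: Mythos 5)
Your reductions for Parts (1), (2), and the lower bound in (4) match the paper essentially verbatim: independent sets in $K(n,k)[V_p]$, deletion of disjoint pairs for (1), Chernoff on the intersection with a star for (4), and for (2) the triangle-removal $+$ Shearer/Ajtai--Koml\'os--Szemer\'edi route, where the condition $k\gg\sqrt{n\ln n}$ is used exactly as you say, to make the triangle count $o(pN)$.

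Where you diverge is in the upper bounds (3) and (4), and the divergence is a real gap. You propose to control the container count via a quantitative \ekr \emph{stability} dichotomy (close to a star vs.\ far from every star), and you yourself correctly flag that this requires a stability statement that is simultaneously uniform over the full range $2\le k\le n/2$, quantitative with a constant-factor size loss, and tied to a container bound surviving down to $p\asymp(n/k)\ln^2(n/k)D^{-1}$. The paper does not have such a statement and does not need one: the key structural input for (3) and (4) is not stability but \emph{supersaturation}. Concretely, Lemma~\ref{lem:hoffman} (an extension of Hoffman's eigenvalue bound) together with Lov\'asz's computation $\lambda_{\min}=-\frac{k}{n-k}D$ for $K(n,k)$ gives Lemma~\ref{lem:supersaturation}: every $S\subseteq\binom{[n]}{k}$ of size $\ge(1+\tau)\tfrac{k}{n}N$ spans at least $\tfrac{\tau}{1+\tau}(|S|/N)^2 e(G)$ edges. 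This is a first-moment-type, $k$-uniform fact; no stability is required. Feeding this into the Kleitman--Winston container scheme (Lemma~\ref{lem:greedyalg} and Corollary~\ref{cor:supersaturation_ind}), Proposition~\ref{prop:transference}(ii)--(iii) already yields the upper bounds in (3) and (4) with the claimed dependence of $p$ on $k$. The container count is driven by supersaturation alone: each greedy step either shrinks the candidate set geometrically by a $\big(1-\gamma D/N\big)$ factor or lands in a set below the supersaturation threshold $\lambda N=(1+\tau)\tfrac knN$, and balancing $\ell$ against this is what produces the $\ln^2(n/k)$ in the threshold for (4) and the factor $\lambda^{\eps}$ slack that is the $\leq(n/k)^{1-\eps}D^{-1}$ hypothesis in (3). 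Stability is only invoked in this paper for the separate Theorem~\ref{thm:stability}, via Friedgut's theorem and the Friedgut--Regev removal lemma, and indeed only for $k=\Theta(n)$ where those are available; the paper explicitly leaves open whether $\hknp$ satisfies stability for $k=o(n)$. So the ``main obstacle'' you describe is not an obstacle in the paper's proof --- it is an artifact of choosing stability as the structural lever. Replacing it with the Hoffman supersaturation closes the gap and makes the argument uniform in $k$.
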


The first bound follows from a standard deletion argument, and we state it here for completeness.
Note also that $i(\hknp)$ is monotone in $p$, hence, in the range of $p$ around $(n/k)D^{-1}$ not
mentioned in the theorem, we have $i(\hknp)=O((N/D)\ln^2(n/k))$ due to (4).

If $k$ is linear in $n$, the bounds in (1) and (4) 
determine $i(\hknp)$ asymptotically for essentially all $p$. Here, we have a change of behaviour 
around $D^{-1}$. Roughly speaking, for $p$ below $D^{-1}$, essentially  all 
of $\hknp$ is intersecting. Beyond that point, i.e.\ for $p\gg D^{-1}$, the largest intersecting
subhypergraph of $\hknp$ has size very close to the size of the
intersection of a principal hypergraph with $\hknp$. Observe that cases (2) and (3) are trivial for $k = \Theta(n)$.

For $k=o(n)$ there is a rather short range of $p$ where 
$i(\hknp)$ reveals a ``flat'' behaviour. Indeed,
the upper bound (3) shows that $i(\hknp)$ grows slowly with $p$, since
it appears only in the $\ln$-term. 
The corresponding lower bound in (2) shows that
for $k\geq n^{1/2+o(1)}$ this bound is tight up to a multiplicative constant. We provide
no  lower bound for the range $k<n^{1/2-o(1)}$ here, as in this case 
the result of Balogh et al. is more satisfactory. Again, we refer to \cite{BBM} for further information.

Although the ``flat range'' phenomenon might come as a surprise, it has been observed elsewhere.
Indeed, in the dense case, i.e. for $p=1$, and for $k=o(n)$,
the size of the largest intersecting family is vanishing compared to the ambient space, that is, 
$i(\binom{[n]}k)=\frac kn\binom{n}k =o\left (\binom{n} k \right )$. For these so called ``degenerate'' problems,
the random analogues typically reveal such an intermediate flat behaviour, as observed for example in \cite{KKS,KLRS}.

The question of for which range of $p$ the largest intersecting family $\cF\subset\hknp$ is indeed the 
projection of a principal family has been successfully addressed in \cite{BBM} for 
$k<n^{1/2-o(1)}$. For larger $k$, which we are mainly interested in, the problem seems to be more 
complicated, and has only been 
studied recently in \cite{HammKahn1}, for constant $p$. 
We make no contribution to this question here. However,
besides the bounds on $i(\hknp)$, we are able to show stability for $k=\Theta(n)$ in the same 
range for $p$ as in case (4) in Theorem~\ref{thm:ekr}.

\begin{theorem}
\label{thm:stability}
For every $\beta>0$ and $\eps>0$ there exist constants $\delta>0$ and $C>0$ for which the following holds. For any 
$\beta n< k(n) <(1/2-\beta)n$ and $p \geq C \cdot D^{-1}$, asymptotically almost surely
stability holds,
i.e., for every intersecting family $\cF \subset \hknp$ of size
 $|\cF| \geq (1-\delta)p(k/n)N$, there is an element $i\in[n]$ that is contained in all but at most $\eps p (k/n)N$ elements of $\cF$. 
\end{theorem}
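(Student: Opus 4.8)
The plan is to bootstrap the stability statement from the asymptotic count in part (4) of Theorem~\ref{thm:ekr} (or rather its proof machinery) together with a supersaturation / removal-type argument in the ambient hypergraph $\binom{[n]}k$. Fix $\beta>0$ and $\eps>0$. The classical Hilton--Milner / Frankl stability theorem for the \ekr theorem says that in $\binom{[n]}k$ with $k=\Theta(n)$, any intersecting family that is \emph{not} nearly a subfamily of a principal $\cF_i$ has size bounded away from $\frac kn N$ by a multiplicative constant $\gamma=\gamma(\beta,\eps)>0$; more precisely, if $\cF\subset\binom{[n]}k$ is intersecting and no element lies in all but $\eps' \frac kn N$ of its edges, then $|\cF|\le(1-\gamma)\frac kn N$. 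So the deterministic statement we want to transfer to $\hknp$ is: ``robustly non-principal $\Rightarrow$ small''. The obstacle is that an intersecting $\cF\subset\hknp$ of size $(1-\delta)p\frac kn N$ need not extend to a large intersecting family in $\binom{[n]}k$, so we cannot simply quote the deterministic result. Instead I would run a direct first-moment argument over a suitable family of ``witness'' structures.

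Concretely, suppose for contradiction that a.a.s.\ there is an intersecting $\cF\subset\hknp$ with $|\cF|\ge(1-\delta)p\frac kn N$ such that every vertex $i\in[n]$ misses more than $\eps p\frac kn N$ edges of $\cF$. I would first use a container- or partition-type decomposition of the intersecting families of $\binom{[n]}k$ that are ``$\eps$-far from principal'': by the deterministic stability result, each such family sits inside one of at most $\exp(o(N))$ (in fact polynomially many, after a cleaning step) ``container'' intersecting families $\mathcal{G}$, each of size $|\mathcal{G}|\le (1-\gamma)\frac kn N$. Then $\cF\subset\hknp\cap\mathcal G$ for one of these containers, so a.a.s.\ $|\cF|\le |\hknp\cap\mathcal G|$. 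Since $\ex|\hknp\cap\mathcal G| = p|\mathcal G|\le(1-\gamma)p\frac kn N$, and $|\hknp\cap\mathcal G|$ is a sum of independent indicators with mean $\ge cpN$ (using $p\ge CD^{-1}$ and $D=\binom{n-k}{k}$, which for $k=\Theta(n)$ forces $pN$ to be exponentially large, hence concentration with room to spare), a Chernoff bound gives $|\hknp\cap\mathcal G|\le(1-\gamma/2)p\frac kn N$ with probability $1-\exp(-\Omega(pN))$. A union bound over the $\exp(o(pN))$ containers kills the bad event. Choosing $\delta<\gamma/2$ yields the contradiction.

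The step I expect to be the main obstacle is producing a container family of subexponential size (relative to $pN$): the naive bound on the number of maximal intersecting families is far too large, so one needs the stability theorem in a ``list'' or ``defect'' form — every $\eps$-far-from-principal intersecting family is covered by few containers, each still bounded away from the EKR bound. This is exactly the hypergraph-container philosophy, but here the cleaner route is probably a direct combinatorial argument: I would fix a small ``fingerprint'' $S\subset\cF$ of $O(1)$ (or $O(\log n)$) edges whose common structure already forces $\cF$ into a small intersecting family via the deterministic stability theorem, enumerate the $N^{O(1)}$ choices of fingerprint, and for each show the corresponding family has size $\le(1-\gamma)\frac kn N$. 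Making the fingerprint simultaneously \emph{small} (so the union bound is cheap) and \emph{decisive} (so it pins down a genuinely sub-extremal container) is the delicate part; this is where the hypotheses $k=\Theta(n)$ and $p\ge CD^{-1}$ — guaranteeing $pN\ge e^{\Omega(n)}$ — do the real work, since they give us an enormous probabilistic margin to absorb a polynomial (or even mildly exponential) number of bad events. The remaining pieces — the Chernoff estimate, the reduction to $\hknp\cap\mathcal G$, and the choice of constants $\delta,C$ in terms of $\beta,\eps$ — are routine.
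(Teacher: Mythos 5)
Your overall blueprint — fingerprint/container decomposition, then Chernoff plus a union bound, using the exponential margin $pN\geq CN/D=e^{\Omega(n)}$ for $k=\Theta(n)$ — is indeed the paper's strategy (Proposition~\ref{prop:transference}(iv), via Lemma~\ref{lem:greedyalg} and Corollary~\ref{cor:stability_ind}). But there is a genuine gap in the ingredient you feed into the container step, and it is precisely the part you flag as ``delicate.''

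The containers $P(L)$ produced by the Kleitman--Winston process are \emph{not} intersecting families; they are merely vertex subsets of the Kneser graph $K(n,k)$ with few internal edges (i.e.\ few disjoint pairs). Your argument assumes each container $\mathcal{G}$ is itself intersecting with $|\mathcal{G}|\le(1-\gamma)\frac{k}{n}N$, so that plain Friedgut/Frankl stability for intersecting families applies directly. There is no reason for the containers to be intersecting, and making them so by a ``cleaning step'' is exactly the missing lemma: one needs a \emph{removal lemma} for the Kneser graph, stating that a family with few disjoint pairs can be made intersecting by deleting few sets. This is the Friedgut--Regev result (Proposition~\ref{p:ekrremovallemma}), and the paper combines it with Friedgut's stability (Proposition~\ref{p:ekrstability}) to obtain the ``robust stability'' of Definition~\ref{def:stability} (Lemma~\ref{lem:robuststablity}): any $S$ that is large \emph{and} edge-sparse in $K(n,k)$ is close to a principal family. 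Without this, you cannot conclude that a sparse container is either small or near-principal, and the dichotomy driving the union bound collapses. (Also, Hilton--Milner-type stability is stated for fixed $k$, whereas here one needs a version valid for $k=\Theta(n)$; Friedgut's theorem is the right reference.) A secondary, more minor issue: the fingerprint size $\ell$ in the paper is $\Theta(N/D)$, not $O(1)$ or $O(\log n)$, so the number of fingerprints is $\binom{N}{\ell}$ rather than $N^{O(1)}$; it is absorbed anyway because $\exp(-\Omega(\lambda pN))$ dominates, but the parameter bookkeeping in your sketch would not close as written.
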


In the dense case, i.e.\ for $p=1$, the result was proven by Friedgut \cite{Friedgut}. 
Indeed, the proof of Theorem~\ref{thm:stability} 
relies on the result of Friedgut and on a removal lemma for the Kneser graph due to 
Friedgut and Regev~\cite{FriedgutRegev}.

\paragraph{Further results and organization.} 

In proving Theorems ~\ref{thm:ekr} and \ref{thm:stability}, 
it will be convenient for us to work with 
the Kneser graph $K(n,k)$. The vertex set of this graph is $\binom{[n]}k$, and two
$k$-element sets form an edge if and only if they are disjoint. Hence 
$K(n,k)$ is a $\binom{n-k}k$-regular graph on $\binom{n}k$ vertices, and 
a hypergraph $\cF\subset \binom{[n]}k$ is intersecting if and only if 
$\cF$ is an independent set in $K(n,k)$.
Further, let $K(n,k,p)$ denote the subgraph
of $K(n,k)$ induced on the random \emph{vertex} set obtained by including each vertex 
from $\binom{[n]}k$ independently with probability $p$. Due to the correspondence, all bounds
on intersecting subgraphs of $\hknp$ will follow from corresponding bounds on the size of  
largest independent sets in $K(n,k,p)$.

Using this translation, Theorem~\ref{thm:ekr}  
follows from a more general scheme which relies on
the technical Proposition~\ref{prop:transference} and Lemma \ref{lem:hoffman},
to be introduced in the next section. Further, for Theorem~\ref{thm:stability}
we will need Lemma \ref{lem:robuststability}, which together with Lemma \ref{lem:hoffman} 
will be proven in Section~\ref{sec:kneser}.
Based on these results, we will give the proofs of Theorems \ref{thm:ekr}
and \ref{thm:stability} in Section \ref{sec:proofmain}.

In general, the proof scheme based on Proposition~\ref{prop:transference}
and Lemma \ref{lem:hoffman} can be used to bound the size of the largest independent sets 
in random subgraphs of any $D$-regular graph $G$ (actually, a sequence of graphs). 
Here by random subgraph we mean the graph induced on a binomial random subset of the vertex set. 
This application yields asymptotically sharp bounds if $G$
has an independent set of size (close to) $-\lambda_{\min} |V(G)|/(D-\lambda_{\min})$.
Indeed, Theorem \ref{thm:ekr} shows such 
an application to the Kneser graph, and there are many other graphs for which this applies.
We refer, e.g., to \cite{ADFS} for a list of such graphs which include the
weak product of the complete graph,
line graphs of regular graphs which contain a perfect matching, Paley graphs, 
some strongly regular graphs, and
appropriate classes of random regular graphs (see Section 5.1. of \cite{ADFS}).

The proof of Proposition~\ref{prop:transference} will be given in Section~\ref{sec:transference}.
It is based on a description of all independent
sets  in locally dense graphs. 
This idea can be traced back to the work of Kleitman and Winston \cite{KleitmanWinston}, and has 
been exploited in various contexts since their work.
Though similar proofs have been given elsewhere, none of them seems to fully fit in our context. 
This also applies to the powerful extension of the ideas of Kleitman and Winston to hypergraphs 
due to Balogh, Morris and Samotij in~\cite{BMS} (see also \cite{SaxtonThomason}), which only partially 
suits our needs.

\section{Proofs of Theorems \ref{thm:ekr} and \ref{thm:stability}}
\label{sec:proofmain}

As mentioned before, the proofs of the main theorems rely on Proposition~\ref{prop:transference}.
A central notion employed in this proposition which applies to $K(n,k)$ 
is the following.
\begin{definition}\label{def:supersat}
Given $\lambda \in (0,1]$, $\gamma \in  (0,1]$, and a graph $G$ on $N$ vertices, 
 we say that $G$ is  $(\lambda$,$\gamma)$-\emph{supersaturated} 
if for any subset $S \subseteq V(G)$ with $|S| \geq \lambda N$, we have
\[e(S)\geq \gamma \left (\frac{|S|}{N} \right )^2\! e(G).\]
In addition, let $\lambda=\lambda(n)>0$ and $\gamma=\gamma(n)>0$. A sequence 
$\{G_n\}_{n \in \mathbb{N}}$ is called $(\lambda(n), \gamma(n))$-\emph{supersaturated} 
if $G_n$ is 
$(\lambda(n), \gamma(n))$-\emph{supersaturated} for each $n \in \mathbb{N}$.
\end{definition}
Hence, in a $(\lambda,\gamma)$-supersaturated graph $G$ each set $S$ of size at least $\lambda N$ spans many edges. 
Indeed, up to the multiplicative factor $\gamma$, 
$S$ spans as many edges as expected from a random subset of $V(G)$ of the 
same size.

Using an extension of Hoffman's spectral bound \cite{Hoffman}, 
one can relate supersaturation to the eigenvalues of a graph. We refer to Section~\ref{sec:kneser}
for the proof.
\begin{lemma} \label{lem:hoffman}
Let $G$ be a $D$-regular graph on $N$ vertices, and let 
$\lambda_{\mathrm{min}}$ denote the smallest eigenvalue of the adjacency matrix of $G$.
Then every set $S \subset V(G)$ satisfies
\[
e(S) \geq 
\left( \frac{\lambda_{\min}}{D}\frac N{|S|}+\frac{D-\lambda_{\min}}D\right) \left(\frac{|S|}{N}\right)^2\! e(G).
\]
\end{lemma}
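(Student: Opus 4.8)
The plan is to derive Lemma~\ref{lem:hoffman} by following Hoffman's classical eigenvalue argument, but tracking the contribution of the part of the indicator vector that lies in the all-ones direction rather than discarding it. Let $A$ be the adjacency matrix of $G$, let $\mathbf{1}$ denote the all-ones vector, and let $\chi = \chi_S$ be the indicator vector of $S$, with $s := |S|$. Since $G$ is $D$-regular, $\mathbf{1}$ is an eigenvector of $A$ with eigenvalue $D$, and the orthogonal complement of $\mathbf{1}$ is $A$-invariant with all eigenvalues at least $\lambda_{\min}$. Decompose $\chi = \frac{s}{N}\mathbf{1} + w$, where $w \perp \mathbf{1}$ and $\|w\|^2 = s - \frac{s^2}{N}$. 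Then
\[
\chi^\top A \chi = \frac{s^2}{N^2}\,\mathbf{1}^\top A \mathbf{1} + w^\top A w = \frac{s^2}{N^2}\,ND + w^\top A w \geq \frac{Ds^2}{N} + \lambda_{\min}\|w\|^2,
\]
using $\mathbf{1}^\top A \mathbf{1} = ND$ and the spectral bound $w^\top A w \geq \lambda_{\min}\|w\|^2$ on the complement of $\mathbf{1}$.

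Next I would translate this into a statement about $e(S)$. We have $\chi^\top A \chi = 2e(S)$ and, substituting $\|w\|^2 = s - s^2/N$,
\[
2e(S) \;\geq\; \frac{Ds^2}{N} + \lambda_{\min}\!\left(s - \frac{s^2}{N}\right) \;=\; \frac{s^2}{N}\bigl(D - \lambda_{\min}\bigr) + \lambda_{\min} s.
\]
Since $e(G) = ND/2$, dividing through by $ND$ and rearranging gives
\[
e(S) \;\geq\; \left(\frac{D-\lambda_{\min}}{D}\cdot\frac{s^2}{N^2} + \frac{\lambda_{\min}}{D}\cdot\frac{s}{N}\right) e(G) \;=\; \left(\frac{\lambda_{\min}}{D}\cdot\frac{N}{s} + \frac{D-\lambda_{\min}}{D}\right)\!\left(\frac{s}{N}\right)^{\!2} e(G),
\]
which is exactly the claimed inequality after writing $s = |S|$.

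I do not expect a genuine obstacle here: the only subtlety is a sign issue, namely that $\lambda_{\min}$ is negative, so the bound $w^\top A w \geq \lambda_{\min}\|w\|^2$ is the correct (and tight) direction, and the resulting lower bound on $e(S)$ is meaningful precisely because the positive $\frac{s^2}{N^2}$-term eventually dominates the negative $\frac{s}{N}$-term once $|S|$ is large enough — this is what makes the lemma yield supersaturation with $\gamma = \frac{D - \lambda_{\min}}{D}\bigl(1 - \tfrac{\lambda_{\min}}{D-\lambda_{\min}}\cdot\tfrac{N}{|S|}\bigr)$ bounded away from $0$ once $|S|/N$ is bounded below appropriately. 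One should also double-check the edge-counting conventions ($\chi^\top A\chi$ counts each edge of $S$ twice, and loops, of which there are none, would be counted once), but for a simple $D$-regular graph these are the standard identities $\chi^\top A \chi = 2e(S)$ and $\mathbf{1}^\top A \mathbf{1} = 2e(G) = ND$, so no difficulty arises.
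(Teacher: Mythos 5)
Your argument is correct and essentially identical to the paper's proof: both project the indicator vector onto the all-ones eigenvector, bound the orthogonal component by $\lambda_{\min}$, and then use $\|w\|^2 = |S| - |S|^2/N$ together with $e(G) = ND/2$ to rearrange into the claimed form. One tiny aside unrelated to the lemma itself: in your closing remark the factored form of $\gamma$ has a sign slip — it should be $\frac{D-\lambda_{\min}}{D}\bigl(1 + \frac{\lambda_{\min}}{D-\lambda_{\min}}\cdot\frac{N}{|S|}\bigr)$, not $1 - \cdots$ — but this does not affect the proof.
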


As the eigenvalues of the Kneser graph are known due to Lov\'asz~\cite{Lovasz}, we immediately 
conclude the following supersaturation for the Kneser graph.

\begin{lemma}\label{lem:supersaturation}
Let $2 \leq k \leq n/2$ and $\tau=\tau(n)>0$. Then $K(n,k)$ is
$\left((1 + \tau)\frac{k}{n}, \frac{\tau}{1 + \tau}\right)$-supersaturated.
\end{lemma}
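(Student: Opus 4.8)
The plan is to derive Lemma~\ref{lem:supersaturation} directly from Lemma~\ref{lem:hoffman} by plugging in the relevant parameters for the Kneser graph. First I would recall the graph parameters: $K(n,k)$ is $D$-regular on $N$ vertices with $D=\binom{n-k}{k}$ and $N=\binom{n}{k}$, and by Lov\'asz's computation of the eigenvalues of the Kneser graph, the smallest eigenvalue is $\lambda_{\min}=-\binom{n-k-1}{k-1}$. The key elementary identity I need is the ratio $-\lambda_{\min}/D = \binom{n-k-1}{k-1}\big/\binom{n-k}{k} = k/(n-k)$, which one checks by expanding the binomial coefficients. Equivalently, $D/(D-\lambda_{\min}) = (n-k)/n$ and $-\lambda_{\min}/(D-\lambda_{\min}) = k/n$.

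Next I would substitute $S$ with $|S|\geq(1+\tau)\frac{k}{n}N$ into the bound from Lemma~\ref{lem:hoffman}. Writing $x = |S|/N$, the coefficient in front of $x^2 e(G)$ is
\[
\frac{\lambda_{\min}}{D}\cdot\frac{1}{x} + \frac{D-\lambda_{\min}}{D}
= -\frac{k}{n-k}\cdot\frac{1}{x} + \frac{n}{n-k}
= \frac{n}{n-k}\left(1 - \frac{k}{n}\cdot\frac{1}{x}\right).
\]
Since $x\geq(1+\tau)k/n$, we have $\frac{k}{n}\cdot\frac{1}{x}\leq\frac{1}{1+\tau}$, so the parenthesized term is at least $1-\frac{1}{1+\tau}=\frac{\tau}{1+\tau}$, and the whole coefficient is at least $\frac{n}{n-k}\cdot\frac{\tau}{1+\tau}\geq\frac{\tau}{1+\tau}$ because $\frac{n}{n-k}\geq 1$. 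Hence $e(S)\geq\frac{\tau}{1+\tau}\,x^2\,e(G) = \frac{\tau}{1+\tau}(|S|/N)^2 e(G)$, which is exactly the $\left((1+\tau)\frac{k}{n},\frac{\tau}{1+\tau}\right)$-supersaturation condition from Definition~\ref{def:supersat}.

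The only mild subtlety, and the place I would be most careful, is getting the eigenvalue identity $-\lambda_{\min}/D = k/(n-k)$ right, including that $\lambda_{\min}$ really is the most negative eigenvalue (Lov\'asz's formula gives eigenvalues $(-1)^j\binom{n-k-j}{k-j}$ for $j=0,\dots,k$, and one must confirm the $j=1$ term dominates in absolute value among the negative ones — in fact it equals $\lambda_{\min}$). Everything else is a routine substitution, so I would keep the write-up short: state the eigenvalue, record the ratio identity, substitute into Lemma~\ref{lem:hoffman}, and simplify.
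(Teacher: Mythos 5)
Your proposal is correct and follows essentially the same route as the paper: invoke Lov\'asz's formula for $\lambda_{\min}$ of the Kneser graph, substitute into Lemma~\ref{lem:hoffman}, use $|S|\geq(1+\tau)\tfrac{k}{n}N$ to bound the coefficient below by $\tfrac{n}{n-k}\cdot\tfrac{\tau}{1+\tau}\geq\tfrac{\tau}{1+\tau}$. The paper simply cites Lov\'asz for the value of $\lambda_{\min}$ without the sanity check you flag, but otherwise the arguments coincide.
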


\begin{proof}
The Kneser graph $K(n,k)$ has degree $D=\binom{n-k}k$, and
the smallest eigenvalue of $K(n,k)$ is 
given by (see \cite{Lovasz}):
\[\lambda_{\mathrm{min}}=- \binom{n-k-1}{k-1} = - \frac{k}{n-k} D.\] 
Let $S\subset \binom{[n]}k$ be of size at least $(1+\tau)\frac knN$, with $N=\binom{n}k$. Lemma~\ref{lem:hoffman} implies that  
$$e(S)\geq \left( -\frac{n}{(n-k)(1+\tau)}+\frac{n}{n-k}\right) \left(\frac{|S|}{N}\right)^2\! e(G),$$
and the claim follows.
\end{proof}

Beyond the notion of supersaturation needed for the proof of Theorem~\ref{thm:ekr}, we will rely on the following notion of 
robust stability in the proof of Theorem~\ref{thm:stability} (see also \cite{DBLP:journals/rsa/Samotij14}).

\begin{definition} \label{def:stability}
Let $\lambda, \varepsilon, \delta>0$. Let $G$ be a graph on $N$ vertices, 
and let $\mathcal{B}(G)\subseteq \mathcal{P}(V(G))$ be a family of sets.
We say that $G$ is $(\lambda,\mathcal{B}(G))$-stable with respect to 
$(\varepsilon,\delta)$ if for every
$S\subseteq V(G)$ with $\left|S\right|\geq (1-\delta)\lambda N$, we have  either 
\begin{itemize}
\item $e(S)\geq \delta  \left(\frac{\left|S\right|}{N}\right)^2\! \cdot e(G)$, or
\item $| S \, \backslash \, B |\leq \varepsilon\lambda N$, for some $B\in \mathcal{B}(G)$.
\end{itemize}
\noindent In addition, let $\lambda=\lambda(n)>0$, $\left\{G_n\right\}_{n\in\mathbb{N}}$ be a sequence of graphs, and
$\mathcal{B}=\{\mathcal{B}_n\}_{n \in \mathbb{N}}$ with $\mathcal{B}_n\subset\mathcal{P}(V(G_n))$
be a sequence of families of sets. We say that 
$\left\{G_n\right\}_{n\in\mathbb{N}}$ is 
$(\lambda,\mathcal{B})$-stable if for any $\varepsilon>0$
there exists $\delta>0$ and $n_0 \in \mathbb{N}$ such that for all $n \geq n_0$, the graph 
$G_n$ is $(\lambda(n),\mathcal{B}_n)$-stable with respect to $(\varepsilon,\delta)$.
\end{definition}
It is instructive to think of $\cB(G)$ as the family of largest independent sets in $G$, and of $\lambda N$ as
the size of each $B\in\cB$.  The first part of the definition roughly says  
that if $G$ is robustly stable, then any vertex set 
$S$ whose size is close to the size of a largest independent set in $G$ must either contain many edges,
or be close to a largest independent set in structure.

The Kneser graph satisfies robust stability for  $k$ linear in $n$, as stated in the next lemma. 
It is a direct consequence
of the corresponding stability result proven by Friedgut~\cite{Friedgut}, and the removal lemma
proven by Friedgut and Regev~\cite{FriedgutRegev}. Again, we refer to Section~\ref{sec:kneser} for 
the details of the proof. In the following, let $\cF_i\subset\binom{[n]}k$ denote the principal hypergaph centered at $i$, i.e., the hypergraph  
consisting of all $k$-element subsets of $[n]$ containing $i\in [n]$.

\begin{lemma}\label{lem:robuststability}
Let $\beta>0$ and $k=k(n)$, where $\beta n \leq k \leq (1/2 - \beta) n$, and let $G_n = K(n,k)$.  Further, let  
$\mathcal{B}_n(G_n) = \{ \mathcal{F}_i \mid i \in [n]\} \subset \mathcal{P}(V(G_n))$, and set 
$\mathcal{B} = \{\mathcal{B}_n\}_{n \in \mathbb{N}}$. Then 
$G = \{G_n\}_{n \in \mathbb{N}}$ is $(k/n,\mathcal{B})$-stable.
\end{lemma}

With supersaturation and robust stability defined, we are now ready to state our main technical 
result. Given a graph $H$, we use $\alpha(H)$ to denote the size of the largest independent set in $H$. Also, for a finite set $V$, we let $V_p$ be a random subset of $V$ obtained by selecting each element $v \in V$ independently with probability $p$.

\begin{proposition}\label{prop:transference} 
Let $\lambda = \lambda(n)$ and $\gamma = \gamma(n)$ be $(0,1)$-valued functions, and let $G = \{G_n\}_{n \in \mathbb{N}}$ be a family of graphs, where each $G_n$ has $N=N(n)$ vertices \emph{(}with $\lim_{n\to\infty}N(n)=\infty$\emph{)} and average degree $D=D(n)$. For any constant $0 < \varepsilon < 1$ there exist constants $C = C(\varepsilon) > 0$ and $\delta = \delta(\varepsilon) > 0$ such that for any probability sequence $p=p(n) \in (0,1]$, the following holds.  For a random spanning subgraph $H_n=G_n[V_p]$, where $V=V(G_n)$, we have:

\begin{enumerate}
\item[\emph{(}i\emph{)}] If $N^{-1}\ll p\ll D^{-1}$, then $\alpha(H_n)=(1\pm\eps)pN$ asymptotically almost surely.
\item[\emph{(}ii\emph{)}] If $\bG$ is $(\lambda, \gamma)$-supersaturated and 
$9D^{-1}\leq p \leq \lambda^{\varepsilon}(\lambda \gamma D)^{-1}$,
then 
\[\pr\left(\alpha(H_n)>\frac{4N}{\eps\gamma D}\ln(pD)\right)\leq 
\exp\left\{-\frac{N}{\gamma D}\ln (pD)\right\}. \]
\item[\emph{(}iii\emph{)}] If $\bG$ is $(\lambda, \gamma)$-supersaturated and 
$p \geq C (\lambda\gamma D)^{-1}\ln^2(e/\lambda)$, then
\[\pr\left(\alpha(H_n)\geq (1+\varepsilon)\lambda pN\right)\leq \exp\{-\eps^2p\lambda N/24\}.\]
\item[\emph{(}iv\emph{)}] If $\bG$ is $(\lambda, \mathcal{B})$-stable and 
$p \geq C (\lambda D)^{-1}\ln^2(e/\lambda)$, then 
with probability at least $1- \exp(-\delta^2\lambda p N/2)$, the following holds\emph{:}\ every independent set $I$ in $H_n$ of size at least $(1-\delta)\lambda pN$ satisfies $|I \, \backslash \, B| \leq \varepsilon \lambda pN$ for some $B\in\mathcal{B}_n$.
\end{enumerate}
\end{proposition}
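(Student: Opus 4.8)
The plan is to cast all four statements as bounds on the independence number of the random induced subgraph $H_n=G_n[V_p]$ and to derive them from one common tool, a Kleitman--Winston/container-type lemma. Write $V=V(G_n)$ and $N=|V|$. Part (i) is the usual deletion argument: a Chernoff bound gives $|V_p|=(1\pm o(1))pN$ a.a.s., which is the upper bound, while $\mathbb{E}[e(H_n)]=p^2 e(G_n)=p^2ND/2=o(pN)$ because $p\ll D^{-1}$, so by Markov $e(H_n)=o(pN)$ a.a.s.\ and deleting one endpoint from each edge of $H_n$ leaves an independent set of size $(1-o(1))pN$.

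For (ii)--(iv) I would first isolate a container lemma for $(\lambda,\gamma)$-supersaturated graphs. Running the greedy process that repeatedly deletes a vertex of maximum current degree, supersaturation guarantees that while the remaining set $A$ has at least $\lambda N$ vertices it contains a vertex of degree $\gtrsim\gamma|A|D/N$, so that one halving of $|A|$ costs only $O(N/\gamma D)$ vertices of the ``fingerprint''. Iterating the halving $O(\ln(1/\lambda))$ times, this assigns to every independent set $I$ of $G_n$ a fingerprint $T(I)\subseteq I$ with $|T(I)|\le t:=O\!\big(\tfrac{N}{\gamma D}\ln(e/\lambda)\big)$ and a container $C(T(I))\supseteq I$, of size at most $(1+o(1))\lambda N$, depending only on $T(I)$. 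The observation that couples this to randomness is that if $I$ is independent in $H_n$ then $T(I)\subseteq I\subseteq V_p$, so the relevant containers are exactly those indexed by the at most $\binom{|V_p|}{\le t}$ fingerprints contained in the sparse set $V_p$; this is what replaces $N$ by $|V_p|\approx pN$ in the counting.

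Given the lemma, part (iii) is a Chernoff-plus-union-bound. For $I$ independent in $H_n$, $|I|\le|V_p\cap(C(T(I))\cup T(I))|$; for a fixed container $C$ with $|C|\le(1+\varepsilon/3)\lambda N$ one has $\Pr\!\big[|V_p\cap C|\ge(1+\varepsilon)\lambda pN\big]\le\exp(-c\varepsilon^2\lambda pN)$, and summing over the at most $(2epN/t)^t$ relevant fingerprints and using the hypothesis $p\ge C(\lambda\gamma D)^{-1}\ln^2(e/\lambda)$ to check $t\ln(2epN/t)\le\tfrac{c}{2}\varepsilon^2\lambda pN$ — the two logarithmic factors of the hypothesis absorbing respectively the $\ln(e/\lambda)$ in $t$ and the $\ln(p\gamma D)$ in $\ln(2epN/t)$ — gives the stated tail bound. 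Part (iv) runs the same machinery with the removal process stopped as soon as the current set either drops below $(1-\delta)\lambda N$ or becomes $(\varepsilon/2)$-close to some $B\in\mathcal B_n$ (the first bullet of robust stability supplies the required high-degree vertex whenever neither has happened): for a ``close'' container $C$, $I\subseteq C$ and $I\subseteq V_p$ give $|I\setminus B|\le|V_p\cap(C\setminus B)|$, which concentrates below $\varepsilon\lambda pN$, while a ``small'' container $C$ has $|V_p\cap C|$ concentrated below $(1-\delta)\lambda pN$; a union bound over the relevant containers (and, for the first type, over $\mathcal B_n$) finishes it.

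Part (ii) is in the sparser, ``$G(N',q)$-like'' window, where the target $\Theta\!\big(\tfrac{N}{\gamma D}\ln(pD)\big)$ exceeds $\lambda pN$, so the ``$I$ lives in one container of size $\lambda N$'' picture is too wasteful. Here I would instead first dispose of the trivial case $s_0:=\tfrac{4N}{\varepsilon\gamma D}\ln(pD)\ge\alpha(G_n)$ (then the probability is $0$), and otherwise estimate $\Pr[\alpha(H_n)\ge s_0]\le p^{s_0}\cdot\big|\{\text{independent }s_0\text{-subsets of }G_n\}\big|$, bounding the count through the container lemma with the removal process halted at a level $\ell\in(\lambda,1]$ tuned to $p$, so that $(ep\ell N/s_0)^{s_0}$ is exponentially small while $\binom{N}{\le t(\ell)}$ stays under control, the product beating $\exp\!\big(-\tfrac{N}{\gamma D}\ln(pD)\big)$. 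The hard part, I expect, is exactly this last balance: making the container count and the first-moment exponent fit together for \emph{all} $p$ from $9D^{-1}$ up to $\lambda^\varepsilon(\lambda\gamma D)^{-1}$, in particular when $D$ is large and the bare first-moment count is not by itself enough, which seems to need the container structure to be exploited rather sharply (and may call for a genuinely iterated container construction). A secondary difficulty is in (iv): robust stability of $G_n$ is a statement about sets of size $\approx\lambda N$, whereas independent sets of $H_n$ have size $\approx\lambda pN\ll\lambda N$, and it is precisely the reduction to containers of size $\approx\lambda N$ that bridges that gap.
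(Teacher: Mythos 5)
Your overall strategy is exactly the paper's: a Kleitman--Winston fingerprint/container lemma for $(\lambda,\gamma)$-supersaturated graphs (with a stability-aware variant for (iv)), a deletion argument for (i), a first-moment calculation for (ii), and Chernoff-plus-union-bound for (iii) and (iv). Your handling of (i), (iii), (iv) tracks the paper closely, including the correct observation that the $\ln^2(e/\lambda)$ hypothesis splits into one log absorbing the fingerprint length and one absorbing the $\ln(epN/\ell)$ in the union bound, and the correct remark that robust stability at scale $\lambda N$ is transferred to sets of size $\lambda pN$ precisely via the container. The only place you leave open is (ii), and your worry that a ``genuinely iterated container construction'' might be needed is unfounded: the single-step container works across the whole window $9D^{-1}\le p\le\lambda^\varepsilon(\lambda\gamma D)^{-1}$ once $\ell$ is tuned to $p$. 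Concretely, with $\ell=(1+\varepsilon)\tfrac{N}{\gamma D}\ln(pD)$ and $t=\tfrac{4N}{\varepsilon\gamma D}\ln(pD)$ one has $\ell\le\varepsilon t/2$ and the container corollary gives
\[
\mathbb{E}\big[\#\{I\in\mathcal I_{H_n}(t)\}\big]\le\binom{N}{\ell}\binom{\nu(\ell)N}{t-\ell}p^t\le\left(\frac{e\gamma pD\,\nu(\ell)^{1-\varepsilon/2}}{\ln(pD)}\right)^{t},
\qquad \nu(\ell)=\max\Big\{\Big(1-\tfrac{\gamma D}{N}\Big)^{\ell},\lambda\Big\};
\]
then one case-splits on $\nu(\ell)$: if $\nu(\ell)=\lambda$, the hypothesis $p\le\lambda^\varepsilon(\lambda\gamma D)^{-1}$ gives $\gamma pD\,\nu^{1-\varepsilon/2}\le\lambda^{\varepsilon/2}\le1$, while if $\nu(\ell)\le(pD)^{-(1+\varepsilon)}$ then $\gamma pD\,\nu^{1-\varepsilon/2}\le\gamma\le1$. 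Either way the expectation is at most $(e/\ln(pD))^t$ and Markov finishes (ii). So the ``halting level'' you gesture at is exactly the role of the $\max$ in $\nu(\ell)$, and the two regimes of $\nu(\ell)$ are what make the single container step tight for all admissible $p$.
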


In addition, the following result  will be needed for the lower bound (2) in
Theorem~\ref{thm:ekr}. It is Shearer's extension \cite{Shearer} of a result due to Ajtai, Koml\'os and Szemer\'edi~\cite{AKS}.

\begin{proposition}[\cite{AKS}, \cite{Shearer}]
\label{prop:aks}
Let $G = \{G_n\}_{n \in \mathbb{N}}$ be a sequence of graphs on $N = N(n)$ vertices with average 
degree at most $D = D(n)>1$. If each $G_n$ is 
triangle-free, then $G_n$ contains an independent set of size 
$N(D\ln D-D+1)/(D-1)^2 \geq N (-1 + \ln D)/D$. \qed
\end{proposition}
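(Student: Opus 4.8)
The plan is to follow Shearer's argument \cite{Shearer}, which is the form of the Ajtai--Koml\'os--Szemer\'edi bound \cite{AKS} carrying the sharp leading constant. Set $f(x) = \frac{x\ln x - x + 1}{(x-1)^2}$ for $x>0$, $x\neq 1$, extended continuously by $f(1) = \tfrac12$ and $f(0)=1$; a short calculus check shows that $f$ is convex and strictly decreasing on $[0,\infty)$. Everything reduces to the per-graph claim that \emph{every triangle-free graph $H$ on vertex set $V$ with degree sequence $(d_v)_{v\in V}$ satisfies $\alpha(H)\ge\sum_{v\in V}f(d_v)$}. Granting this, fix $n$, let $N=N(n)$, and let $\bar d = \frac1N\sum_v d_v = 2e(G_n)/N \le D$ be the average degree of $G_n$. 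Convexity of $f$ and Jensen's inequality give $\sum_v f(d_v)\ge N f(\bar d)$, and since $f$ is decreasing and $\bar d\le D$ we get $f(\bar d)\ge f(D)$; hence $\alpha(G_n)\ge N f(D) = N\frac{D\ln D - D + 1}{(D-1)^2}$. The final inequality $f(D)\ge\frac{-1+\ln D}{D}$ is then elementary: after clearing denominators it becomes $(2D-1)\ln D\ge D-1$ for $D>1$, which holds since both sides vanish at $D=1$ while the derivative of the left side, $2\ln D + 2 - 1/D$, exceeds $1$ for every $D>1$. The passage from one graph to the sequence $\{G_n\}$ is trivial, since the bound is established for each $G_n$ separately.

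For the per-graph claim I would induct on $|V(H)|$, the base case $|V|\le 1$ being immediate from $f(0)=1$. For the inductive step, pick a vertex $v$ uniformly at random, put $W_v=\{v\}\cup N_H(v)$, and form an independent set of $H$ by taking $v$ together with a large independent set of the (again triangle-free) graph $H-W_v$; the inductive hypothesis supplies one of size at least $\sum_{u\notin W_v}f\big(d_{H-W_v}(u)\big)$, so the built set has size at least $1+\sum_{u\notin W_v}f\big(d_u - |N_H(u)\cap N_H(v)|\big)$. Triangle-freeness enters through the facts that $N_H(v)$ is independent and that $|N_H(u)\cap N_H(v)|=0$ whenever $u$ and $v$ are adjacent; these make the relevant probabilities and the identity $\sum_{v}|N_H(u)\cap N_H(v)| = \sum_{w\in N_H(u)}d_w$ clean. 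Taking expectation over $v$ and invoking convexity of $f$, the desired bound $\mathbb{E}_v[\text{size built}]\ge\sum_v f(d_v)$ reduces to a single one-variable inequality for $f$ --- precisely the inequality that Shearer's choice of $f$ is engineered to satisfy --- whose verification is routine.

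The main obstacle is this last step: one must carry out the random-vertex averaging carefully enough that each surviving degree changes by exactly the amount forced by triangle-freeness, and then check that the resulting expectation collapses, via convexity, to the \emph{sharp} one-variable inequality for $f$ rather than to something weaker --- this is where the constant $1$ in the statement comes from. The auxiliary analytic facts --- convexity and monotonicity of $f$ on $[0,\infty)$, including its behaviour near the removable singularity at $x=1$, together with the estimate $(2D-1)\ln D\ge D-1$ --- are elementary but deserve a short verification. Everything else (Jensen, monotonicity, and the trivial bookkeeping for the sequence) is immediate.
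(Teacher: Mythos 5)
The paper does not prove this proposition; it is stated as a cited black box (note the \qed immediately following the statement), so there is no in-paper argument to compare against. Your reconstruction follows the intended source, Shearer's sharpening of the Ajtai--Koml\'os--Szemer\'edi bound. Your algebra for the secondary inequality is correct: clearing denominators in $\frac{D\ln D - D + 1}{(D-1)^2} \ge \frac{\ln D - 1}{D}$ does reduce to $(2D-1)\ln D \ge D-1$, and your verification (equality at $D=1$, derivative comparison) is sound. Two remarks on the main step. First, you prove the stronger degree-sequence form $\alpha(H)\ge\sum_v f(d_v)$ and then pass to average degree by Jensen; Shearer's original argument carries the average degree through the induction directly, so your route is a little longer but entirely valid, and the degree-sequence strengthening is indeed known to hold with this $f$. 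Second, and more substantively, your sketch stops exactly where Shearer's proof does its real work: you observe that the expectation over a random vertex reduces, by convexity of $f$, ``to a single one-variable inequality for $f$ \dots whose verification is routine,'' but you neither write down that inequality nor check it. That inequality --- in Shearer's formulation, essentially $(d+1)f(d)\ge 1+(d^2-d)\,|f'(d)|$ for $d\ge 0$, with $f$ chosen to make it hold with equality --- is the entire content of the theorem, and the claim that it is ``routine'' is not self-evident. Since the paper itself delegates the whole result to the citation, your sketch is at the right level of detail for the paper's purposes, but as a standalone proof it has a genuine gap at the decisive step.
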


Finally, we shall repeatedly use Chernoff's bound for binomial random variables, which we state here
for reference (see~\cite[Theorem~2.1]{JLR_randomgraphs}).

\begin{lemma}\label{l:chernoff}
Given integers $m, s>0$ and $\zeta \in [0,1]$, we have\emph{:} 
\begin{align}
\label{eq:chernoffut}
\pr(\Bin(m,\zeta)\geq m\zeta+ s)&\leq e^{-s^2/(2\zeta m+s/3)}.\\
\label{eq:chernofflt}
\pr(\Bin(m,\zeta)\leq m\zeta- s)&\leq e^{-s^2/(2\zeta m)}.
\end{align}
\end{lemma}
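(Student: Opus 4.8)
Both inequalities are the classical Chernoff bounds, and the plan is to derive them by the exponential-moment method. Write $X=\Bin(m,\zeta)=\sum_{i=1}^{m}Y_i$ as a sum of independent Bernoulli$(\zeta)$ indicators and set $\mu=\ex X=\zeta m$. For the upper tail \eqref{eq:chernoffut} I fix a parameter $t>0$, apply Markov's inequality to $e^{tX}$, and use independence together with $1+x\le e^{x}$:
\[
\pr(X\ge\mu+s)\le e^{-t(\mu+s)}\,\ex e^{tX}=e^{-t(\mu+s)}\bigl(1+\zeta(e^{t}-1)\bigr)^{m}\le\exp\!\bigl(\mu(e^{t}-1)-t(\mu+s)\bigr).
\]
We may assume $0<s\le m-\mu$, since otherwise the left-hand side is $0$; the exponent is then minimized at $t=\ln(1+s/\mu)$, which yields the sharp bound $\pr(X\ge\mu+s)\le\exp(-\mu\,\varphi(s/\mu))$, where $\varphi(x):=(1+x)\ln(1+x)-x$.

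For the lower tail \eqref{eq:chernofflt} I run the same computation with a negative parameter (equivalently, bound $\ex e^{-uX}$ for $u>0$); assuming $0<s\le\mu$ and optimizing over $u$ gives $\pr(X\le\mu-s)\le\exp(-\mu\,\varphi(-s/\mu))$, with $\varphi(-x)=(1-x)\ln(1-x)+x$ for $x\in[0,1]$.

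It then remains only to convert these sharp exponents into the stated forms via two elementary scalar inequalities: $\varphi(x)\ge \dfrac{x^{2}}{2(1+x/3)}$ for $x\ge0$, and $\varphi(-x)\ge x^{2}/2$ for $x\in[0,1]$. Both follow from the integral representations $\varphi(\pm x)=\int_{0}^{x}\pm\ln(1\pm u)\,du$: the lower-tail estimate is immediate because $-\ln(1-u)\ge u$, while the upper-tail one reduces to a short calculus check (e.g.\ showing the difference has a nonnegative derivative and vanishes at $0$). Substituting $x=s/\mu$ and simplifying then produces exactly \eqref{eq:chernoffut} and \eqref{eq:chernofflt}. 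The only step requiring any genuine care is the first scalar inequality, so I expect that to be the (minor) main obstacle; every other step is mechanical, and the whole lemma is standard — it is exactly the form recorded in \cite[Theorem~2.1]{JLR_randomgraphs}, which is why we state it here only for reference.
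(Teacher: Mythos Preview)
Your argument is correct and is exactly the standard exponential-moment derivation of the Chernoff bounds. The paper, however, does not prove this lemma at all: it is stated ``for reference'' with a citation to \cite[Theorem~2.1]{JLR_randomgraphs}, so there is no proof in the paper to compare against --- your write-up simply supplies the omitted (textbook) argument.
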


We are now ready to present the proofs of Theorems \ref{thm:ekr} and \ref{thm:stability}.

\begin{proof}[Proof of Theorem~\emph{\ref{thm:ekr}}]
Given $0 < \eps < 1$, apply Proposition~\ref{prop:transference} with $\eps/4$ in order to obtain
a corresponding constant $C_1$. Let $C=\max\{32/\eps^2,32 C_1/\eps\}$. Further, let $k=k(n)$, and $G_n = K(n,k)$. Recall that $G_n$ is a $D$-regular graph on 
$N$ vertices, with $D=D(n)=\binom{n-k}k$
and $N=N(n)=\binom{n}k$. Let $H_n = G_n[V_p]$, where $V = V(G_n)$, and $V_p$ is the set
obtained by including each vertex of $V$ independently with probability $p$. 
We apply Proposition~\ref{prop:transference} to $\{G_n\}_{n\in\mathbb{N}}$, with functions 
$N(n)$ and $D(n)$ as defined above.
The first bound of Theorem~\ref{thm:ekr} follows immediately from the first case of Proposition~\ref{prop:transference}.

For the third and fourth bounds of Theorem~\ref{thm:ekr}, note that by Lemma~\ref{lem:supersaturation} applied with $\tau=\eps/2$, we know that $G_n$ is 
$(\lambda,\gamma)$-supersaturated, with $\lambda\leq (1+\eps/2) k/n$ 
and $\gamma=\eps/4$. Thus we can apply Proposition~\ref{prop:transference} in both cases. We start with the third bound of Theorem~\ref{thm:ekr}. Assume that 
$k=o(n)$, since for $k$ linear in $n$ this range of $p$ is trivial.
By the second part of Proposition~\ref{prop:transference} applied with $\eps_1=\eps/2$, we derive that
for $9D^{-1}\leq p \leq \left(n/k\right)^{1-\eps/2}(\eps D)^{-1}$, which contains our interval for $p$ in the third case, we have
\[i(\hknp)<\frac{8}{\eps_1^2}\frac ND\ln (pD)\leq C\frac{N}D\ln(pD)\]
with probability at least $(1-\exp(-4 N\ln(pD)/(\eps D)))$. As $p\gg D^{-1}$, this probability tends to one as $n$ goes to infinity, which gives the upper bound in the third case.

Next we show the fourth bound of Theorem~\ref{thm:ekr}. The lower bound follows by considering
the subhypergraph of $\hknp$ consisting of all hyperedges containing, say, the element $n$. Using the Chernoff
bound (Lemma \ref{l:chernoff}), we have with high probability that this (intersecting) subhypergaph has size at least  
$(1-\eps)p(k/n)N$.
For the upper bound, we apply the third bound of Proposition~\ref{prop:transference} 
with $\eps/4$ and $\lambda$, $\gamma$ as chosen above. 
Then, by the choice of $C$, 
we have $i(\hknp)\leq (1+\eps)\frac kn p N$
for $p\geq C(n/k)D^{-1}\ln^{2}(n/k)\geq C_1(\lambda \gamma D)^{-1} {\ln^2 (e/\lambda)}$, and the claim follows.

Finally, we prove the second bound of Theorem~\ref{thm:ekr}. Observe that this range of $p$ is nontrivial only if $k \ll n$. By Chernoff's 
bound, almost surely $H_n$ has at least $(1-\eps/32)pN$ vertices.
Further, $\mathbb{E}[e(H_n)]=p^2 ND/2$, and it is not hard to see that $\mathsf{Var}[e(H_n)]\leq 2p^3N^2D +p^2ND$.
By Chebyshev's inequality, we derive
$$\mathbb{P}\big(|e(H_n)-\ex(e(H_n))|\geq \varepsilon p^2 ND/32\big)\leq \frac{32^2\mathsf{Var}(e(H_n))}{\varepsilon^2p^4 N^2D^2}$$
which goes to zero by the choice of $p$. 
\begin{claim}
For $(n \ln n)^{1/2} \ll k \ll n$ and $p\leq (n/k)D^{-1}$, asymptotically almost surely the number of triangles in $H_n$ is at most  $\eps pN/32$.
\end{claim}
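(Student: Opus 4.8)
The plan is to bound the expected number of triangles in $H_n = G_n[V_p] = K(n,k)[V_p]$ and then apply Markov's inequality. First I would count the triangles in the Kneser graph $K(n,k)$ itself: a triangle is a triple of pairwise disjoint $k$-sets in $[n]$, so the number of such triples is at most $\binom{n}{3k}\binom{3k}{k}\binom{2k}{k}/6 = O(N D^2)$ by the natural encoding (choose the $3k$ vertices covered, then split them into three $k$-sets), but for our purposes the cleaner estimate is that each vertex $A$ lies in at most $D^2$ triangles (its $D$ neighbours span at most $\binom{D}{2}$ potential edges), so the total is at most $N D^2 / 6$. Since each triangle of $K(n,k)$ survives in $H_n$ with probability exactly $p^3$ (all three vertices must be selected, independently), we get $\ex[\#\text{triangles in }H_n] \leq p^3 N D^2 / 6$.

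Next I would check that the hypothesis $p \leq (n/k) D^{-1}$ makes this expectation small relative to the target $\eps p N / 32$. We need $p^3 N D^2 / 6 \ll p N$, i.e. $p^2 D^2 \ll 1$, i.e. $p \ll D^{-1}$ — but the hypothesis only gives $p \leq (n/k) D^{-1}$, which is weaker by the factor $n/k$. So the crude bound $N D^2/6$ on the triangle count is not good enough; I must use the extra structure of the Kneser graph. The right estimate is that the number of triangles in $K(n,k)$ is $\Theta(N D \cdot (k/n) D) = \Theta((k/n) N D^2)$: given a vertex $A$ and a neighbour $B$ (disjoint from $A$), the number of common neighbours $C$ (disjoint from both $A$ and $B$, so a $k$-subset of the remaining $n-2k$ elements) is $\binom{n-2k}{k}$, and one computes $\binom{n-2k}{k}/\binom{n-k}{k} = \prod_{i=0}^{k-1}\frac{n-2k-i}{n-k-i} \leq (1 - k/(n-k))^k \leq e^{-k^2/(n-k)}$, which for $k \gg \sqrt{n\ln n}$ is $\ll (k/n)$; more simply $\binom{n-2k}{k} \leq \binom{n-k}{k} \cdot \left(\frac{n-2k}{n-k}\right)^{k}$ and one only needs this to be at most, say, $(k/n)\binom{n-k}{k}$, which holds for $k \geq \sqrt{n \ln(n/k)}$. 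Thus the number of triangles is at most $\frac12 N D \cdot (k/n) D$, and so $\ex[\#\text{triangles in }H_n] \leq \frac12 p^3 N D^2 (k/n) \leq \frac12 p N \cdot (p D)^2 (k/n)$. Using $pD \leq n/k$ gives the bound $\frac12 p N (n/k)^2 (k/n) = \frac12 p N (n/k)$, which is still too big — so one must instead keep one factor of $pD \leq n/k$ and argue $\frac12 p^3 N D^2 (k/n) = \frac12 (pN)(pD)(pD)(k/n) \leq \frac12 (pN) \cdot 1 \cdot (pD)$ only after first noting $pD(k/n) \leq 1$; then the remaining $pD$ must also be controlled, which it is not in general.

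Given this tension, the cleanest route — and the one I would actually commit to — is: show the number of triangles $T$ in $K(n,k)$ satisfies $T \leq c\,(k/n)^2 N D^2$ for an absolute constant $c$ (improving the common-neighbour count to $\binom{n-2k}{k} \leq (k/n)\binom{n-k}{k}$ and applying it along one edge, noting the factor $(k/n)$ captures the "thinning" relative to $D$, and one gets two such factors by a more careful two-step count, or equivalently by bounding the codegree of two disjoint sets against $D$ twice); then $\ex[\#\text{triangles in }H_n] = p^3 T \leq c (pN)(pD(k/n))^2 \leq c\, pN$ using $pD \leq n/k$, and then shrink further using $p \ll D^{-1}$... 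I would need to reconcile which hypothesis is really available. The honest statement is that with $p \leq (n/k)D^{-1}$ and $k \gg \sqrt{n\ln n}$, the quantity $(pD)^2 (k/n)^2 \cdot (\text{something})$ must be shown $\ll 1$; the condition $k \gg \sqrt{n \ln n}$ is precisely what forces $\binom{n-2k}{k}/\binom{n-k}{k} = o(k/n)$ (indeed $\leq e^{-k^2/n} = o(1/\mathrm{poly})$), so the codegree of a disjoint pair is $o((k/n)D)$, giving $T = o((k/n) N D^2 \cdot (k/n)) $ type savings that beat the $(n/k)$ loss from $pD \leq n/k$. The main obstacle is therefore getting this codegree bound tight enough: I expect to need $\binom{n-2k}{k} \leq \binom{n-k}{k}\cdot e^{-k^2/(n-k)}$ and then to verify that $e^{-k^2/(n-k)} \cdot (n/k) = o(1)$ precisely when $k \gg \sqrt{n\ln(n/k)}$, which is implied by $k \gg \sqrt{n\ln n}$ — this is the one calculation where the exact hypothesis of the claim is used, and everything else (Markov, the deletion of triangles to apply Proposition~\ref{prop:aks}) is routine.
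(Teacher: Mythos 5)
Your core approach is the same as the paper's: bound the expected triangle count in $H_n$ by $p^3\binom nk\binom{n-k}k\binom{n-2k}k = O\!\left(p^3 N D \binom{n-2k}k\right)$, reduce via $p\leq (n/k)D^{-1}$ to an estimate on the ratio $\binom{n-k}k/\binom{n-2k}k$, use the hypothesis $k\gg\sqrt{n\ln n}$ to show the ratio is exponentially large (the paper bounds it below by $(1+k/n)^k\geq e^{k^2/n-k^3/n^2}$; you bound the reciprocal above by $(1-k/(n-k))^k\leq e^{-k^2/(n-k)}$, which is the same estimate), and finish with Markov. So this is not a genuinely different route.

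That said, your write-up has two arithmetic wobbles worth flagging. First, the target inequality is $p^2 D\binom{n-2k}k\ll 1$, which with $pD\leq n/k$ becomes $(n/k)^2\binom{n-2k}k/\binom{n-k}k\ll 1$; your final verification condition is stated with a single factor $(n/k)$ rather than $(n/k)^2$. It happens not to matter, because $k\gg\sqrt{n\ln n}$ forces $k^2/n\gg\ln n\geq\ln(n/k)$, so $e^{-k^2/(n-k)}$ beats any fixed power of $n/k$ — but the condition you wrote down is not literally the one you need. Second, the intermediate detours through the bounds $T\leq ND^2$ and $T\leq (k/n)ND^2$ and the language about ``keeping one factor of $pD$'' and a ``two-step count'' giving ``two such factors'' are misleading: there is only one codegree estimate applied once (to one edge of the triangle), and the gain of more than $(k/n)^2$ comes entirely from $\binom{n-2k}k/\binom{n-k}k$ being sub-polynomial, not from applying the crude bound $\binom{n-2k}k\leq (k/n)D$ twice. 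Committing early to the single ratio bound $\binom{n-k}k/\binom{n-2k}k\geq(1+k/n)^k\geq e^{k^2/n-k^3/n^2}\gg(n/k)^2$, as the paper does, removes all of these detours.
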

\begin{proof}
The expected number of triangles in $H_n$ is at most 
$p^3\binom{n}k\binom{n-k}k\binom{n-2k}k$.
Using Markov's inequality and $p\leq (n/k)D^{-1}$, the claim follows if we can show that  
$(n/k)^2  \binom{n-2k}k \ll \binom{n-k}k.$ 
Indeed, \[\binom{n-k}k \binom{n-2k}k^{-1} =\frac{(n-k)\dots(n-2k+1)}{(n-2k)\dots (n-3k+1)} \geq \left(\frac{n-k}{n-2k}\right)^k\geq \left(1+\frac kn\right)^k,\]
and using $(1+x)\geq \exp\{x-x^2\}$ for $0 < x < 1$, we obtain together with our assumption $(n \ln n)^{1/2} \ll k \ll n$ that
\[\binom{n-k}k \binom{n-2k}k^{-1} \geq \exp\{k^2/n-k^3/n^2\}\gg n^2 \geq (n/k)^2,\]
which completes the proof of the claim.
\end{proof}

Hence, by removing at most $\eps pN/32$ vertices, we obtain a triangle free graph with 
at least $(1-\eps/16)pN$ vertices, and no more than $(1/2+\eps/32)p^2 ND$ edges.
Consequently, this graph has average degree at most $(1+\eps/4) pD$, and
due to Proposition~\ref{prop:aks}, it contains an independent set of size 
\[\frac{(1-\eps/16)pN}{(1+\eps/4)pD}\big(\ln ((1+\eps/4)pD) -1\big) \geq (1-\eps)\frac ND\ln pD.\] This completes the proof.
\end{proof}

\begin{proof}[Proof of Theorem~\emph{\ref{thm:stability}}]
Let $\beta>0$ be fixed, and $\beta n\leq k\leq (1/2-\beta)n$. 
Again, let $G_n$ denote the Kneser graph $K(n,k)$.
Set $\lambda=k/n$, and for a given $n$, let $\mathcal{B}_n$ be the set of all 
principal hypergraphs $\mathcal{F}_i$, for $i=1,\ldots,n$. 
By Lemma \ref{lem:robuststability}, the family $G = \{G_n\}$ is $(\lambda,\mathcal{B})$-stable, where $\mathcal{B}=\{\mathcal{B}_n\}_{n\in\mathbb{N}}$. For a given $\varepsilon>0$, we apply Proposition~\ref{prop:transference} in order to obtain constants $C'$ and {$\delta >0$}. Since $k = \Theta(n)$, it is possible to choose {an appropriate constant $C$ such that $\delta$ and $C$ satisfy} the conclusion of the theorem, which completes the proof.
\end{proof}

\section{Proofs of Lemmas~\ref{lem:hoffman} and \ref{lem:robuststability}}
\label{sec:kneser}

As mentioned before, the proof of Lemma~\ref{lem:hoffman} is a straightforward extension
of Hoffman's bound~\cite{Hoffman}.

\begin{proof}[Proof of Lemma~\emph{\ref{lem:hoffman}}]
Given  a $D$-regular $G$ with $N$ vertices and smallest eigenvalue $\lambda_{\min}$, we need to show that for every non-empty 
$S\subset V(G)$, 
\[e_S=e(S)\geq \left(\frac{\lambda_{\min}}D\frac N{|S|}+\frac{D-\lambda_{\min}}D\right)
\left(\frac{|S|}{N}\right)^{2}\!e(G).\]

Let $M$ denote the adjacency matrix of $G$. 
For $x,y \in \mathbb{R}^N$, let $\langle x,y \rangle = \sum_{i=1}^N x_i y_i$. Also,
let $v_S$ be the $0/1$-characteristic vector of $S$. First note that 
$\langle v_S, M v_S \rangle = 2 e_S$. Since $M$ is a symmetric real matrix, it is diagonalizable by an orthonormal basis. Let $u_1, \ldots ,u_N$ be normalized eigenvectors of $M$ with corresponding eigenvalues $\lambda_1 \geq \lambda_2 \geq \ldots \geq \lambda_N = \lambda_\mathrm{min}$, respectively. Since $G$ is a $D$-regular graph, we have $u_1 = (1/\sqrt{N}, \ldots ,1/\sqrt{N})$ and $\lambda_1 = D$. Let $v_S = \sum_{i=1}^N a_i u_i$ be the expansion of $v_S$ by eigenvectors. We have
\[
2 e_S = \langle v_S, M v_S \rangle = \sum_{i=1}^N \lambda_i a_i^2 \geq \lambda_1 a_1^2 + \lambda_\mathsf{min} \sum_{i=2}^N a_i^2.
\]  
Now observe that $a_1 = \langle v_S, u_1 \rangle = |S| /\sqrt{N}$. In addition, $|S| = \langle v_S, v_S \rangle = \sum_{i=1}^N a_i^2$. Therefore,
\begin{eqnarray}\nonumber
2e_S & \geq & D \frac{|S|^2}{N} + \lambda_{\mathsf{min}} \left ( |S| - \frac{|S|^2}{N} \right)\\
&=& |S| \left(\lambda_{\mathrm{min}} + \frac{|S|}{N} \left( D - \lambda_{\mathrm{min}} \right) \right)\nonumber \\
& = & \left(\frac{|S|}{N}\right)^{\!2} {2e(G)} \left({\frac{\lambda_{\min}}D}\frac N{|S|}+ \left(1-\frac{\lambda_{\min}}D\right)\right), \nonumber
\end{eqnarray}
and the lemma follows.
\end{proof}

We now proceed to show robust stability for the Kneser graph for $k = \Omega(n)$. 
The proof is a direct consequence of stability due to Friedgut~\cite{Friedgut} and a removal lemma
for the Kneser graph due to Friedgut and Regev~\cite{FriedgutRegev}, which we state next.

\begin{proposition}[Friedgut \cite{Friedgut}]\label{p:ekrstability}
Given $\beta>0$, let $k=k(n)$ be a sequence of integers satisfying $\beta n\leq k\leq (1/2-\beta)n$. For all $\varepsilon>0$ there exists $\delta>0$ and $n_0$ such that, for all $n \geq n_0$, the following holds.
If $\mathcal{F}\subseteq \binom{[n]}{k}$ is an intersecting family of size at least $(1-\delta)\binom{n-1}{k-1}$, then there is $i\in [n]$ such that $\left|\mathcal{F}\setminus\mathcal{F}_i\right|\leq \varepsilon\binom{n-1}{k-1}$.
\end{proposition}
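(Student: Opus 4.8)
The plan is to give a Fourier-analytic/spectral argument, in the spirit of Friedgut's original proof but organized around the eigenvalue decomposition of the Kneser graph that already appears in the excerpt. Throughout, write $\lambda=k/n$, $N=\binom nk$, $D=\binom{n-k}k$, and identify the intersecting family $\mathcal F$ with an independent set in $K(n,k)$ and with its $0/1$ indicator $f=\mathbf 1_{\mathcal F}\in\mathbb R^N$, so that $\langle f,Mf\rangle=2e(\mathcal F)=0$, where $M$ is the adjacency matrix of $K(n,k)$. By Lov\'asz's computation of the spectrum of $K(n,k)$ (\cite{Lovasz}, already invoked for Lemma~\ref{lem:supersaturation}), $\mathbb R^N$ decomposes as an orthogonal sum $V_0\oplus V_1\oplus\cdots\oplus V_k$ of the eigenspaces of the Johnson scheme, on which $M$ acts by the scalar $\lambda_i=(-1)^i\binom{n-k-i}{k-i}$; in particular $\lambda_0=D$ on the constants $V_0$, the minimum is $\lambda_{\min}=\lambda_1=-\binom{n-k-1}{k-1}=-\tfrac{k}{n-k}D$ on $V_1$, and --- this is where $k=\Theta(n)$ enters --- an elementary binomial-ratio estimate using $\beta n\le k\le(1/2-\beta)n$ gives $\lambda_i-\lambda_{\min}\ge c_\beta D$ for every $i\ge2$, where $c_\beta>0$ depends only on $\beta$.

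Next I would run a stability version of Hoffman's bound (Lemma~\ref{lem:hoffman}). Expanding $f=\sum_i f_i$ with $f_i\in V_i$, one has $\|f_0\|^2=\mu^2N$ where $\mu=|\mathcal F|/N$, $\sum_{i\ge1}\|f_i\|^2=\mu(1-\mu)N$, and
\[0=\langle f,Mf\rangle=D\mu^2N+\lambda_{\min}\mu(1-\mu)N+\sum_{i\ge2}(\lambda_i-\lambda_{\min})\|f_i\|^2\ge D\mu^2N+\lambda_{\min}\mu(1-\mu)N+c_\beta D\sum_{i\ge2}\|f_i\|^2.\]
Dropping the last term recovers the \ekr bound $\mu\le\lambda$; keeping it and using the hypothesis $\mu\ge(1-\delta)\lambda$, a short manipulation gives $\|f-g\|^2=\sum_{i\ge2}\|f_i\|^2=O_\beta(\delta)\,\|f\|^2$, where $g:=f_0+f_1$ is the projection of $f$ onto $V_0\oplus V_1$. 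Thus essentially all of the non-constant Fourier weight of $f$ sits at level $1$.

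The remaining, and main, step is a Kindler-Safra / Friedgut-Kalai-Naor type stability statement for the Johnson scheme: a Boolean function on $\binom{[n]}k$ whose Fourier weight is $\eta$-concentrated on $V_0\oplus V_1$, with $\beta n\le k\le(1/2-\beta)n$, must be $O_\beta(\eta^{c})$-close in $L^2$, for some absolute $c>0$, to one of the functions $\mathbf 0$, $\mathbf 1$, $\mathbf 1_{\mathcal F_j}$, $\mathbf 1_{\overline{\mathcal F_j}}$ for $j\in[n]$ --- here $V_1$ is spanned by the ``dictator'' directions $x\mapsto\mathbf 1[j\in x]-\lambda$, so any element of $V_0\oplus V_1$ is an affine combination of dictators and this is exactly the analogue of the Friedgut-Kalai-Naor theorem for the cube. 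This is the core of Friedgut's argument; one clean route is to transfer to the $\mu_\lambda$-biased cube (legitimate since $\lambda$ is bounded away from $0$ and from $1/2$) and apply the cube FKN theorem, while a direct proof on the Johnson scheme goes through the relevant hypercontractive inequality. Applying it with $\eta=O_\beta(\delta)$ from the previous paragraph, and noting that $\mu\in[(1-\delta)\lambda,\lambda]$ is bounded away from $0$, from $1$, and from $1-\lambda$ (since $\lambda\le1/2-\beta$, so $|\lambda-(1-\lambda)|\ge2\beta$), the only option consistent with the mean of $f$ is closeness to $\mathbf 1_{\mathcal F_i}$ for some $i\in[n]$. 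Then
\[|\mathcal F\setminus\mathcal F_i|\le|\mathcal F\,\triangle\,\mathcal F_i|=\|f-\mathbf 1_{\mathcal F_i}\|^2=O_\beta(\delta^{c})\,N,\]
and choosing $\delta$ small in terms of $\varepsilon$ and $\beta$ makes the right-hand side at most $\varepsilon\binom{n-1}{k-1}$, since $\binom{n-1}{k-1}=\lambda N\ge\beta N$.

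The hard part is the FKN/Kindler-Safra step: upgrading ``$L^2$-close to the span of dictators'' to ``$L^2$-close to a single dictator'' with a genuinely quantitative (polynomial in $\delta$) bound, uniformly over all $k$ in the linear window $[\beta n,(1/2-\beta)n]$. By contrast, the spectral localization in the second paragraph is routine once the gap estimate $\lambda_i-\lambda_{\min}\ge c_\beta D$ for $i\ge2$ is in hand, and that estimate is an elementary computation with ratios of binomial coefficients; the final conversion of constants is bookkeeping.
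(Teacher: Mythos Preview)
The paper does not prove Proposition~\ref{p:ekrstability} at all: it is quoted verbatim from Friedgut~\cite{Friedgut} as a black box, and is then combined with the Friedgut--Regev removal lemma (Proposition~\ref{p:ekrremovallemma}) to deduce the robust stability of the Kneser graph (Lemma~\ref{lem:robuststablity}). So there is no ``paper's own proof'' to compare against; your proposal is not competing with anything in the paper but is rather a sketch of the cited result itself.

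As a sketch of Friedgut's argument your outline is accurate in its architecture: the Hoffman-with-remainder step localizing almost all the Fourier mass of $f=\mathbf 1_{\mathcal F}$ on $V_0\oplus V_1$ is routine and exactly as you describe, and the eigenvalue gap $\lambda_i-\lambda_{\min}\ge c_\beta D$ for $i\ge2$ is indeed an elementary binomial computation once $k$ is linear in $n$. You correctly identify the FKN/Kindler--Safra step on the slice as the substantive part, and you are right that one standard route is to pass to the $p$-biased cube (which is legitimate precisely because $\lambda=k/n$ is bounded away from $0$ and $1/2$). The only caveat is that your write-up treats this step as something one can simply ``apply'': in Friedgut's paper this is the theorem being proved, and the transfer between slice and biased cube, or alternatively a direct hypercontractive argument on the Johnson scheme, is where the real work lies. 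If this were a self-contained proof you would need either to carry out that transfer explicitly or to cite a sharp slice-FKN statement; as a proof \emph{plan} it is fine, but be aware that the paper under review deliberately avoids all of this by citing~\cite{Friedgut}.
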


\begin{proposition}[Friedgut and Regev \cite{FriedgutRegev}]\label{p:ekrremovallemma}
Given $\beta>0$, let $k=k(n)$ be a sequence of integers satisfying $\beta n\leq k\leq (1/2-\beta)n$. Moreover, let $N=\binom{n}{k}$ and $D=\binom{n-k}{k}$.
For all $\varepsilon>0$ there exists $\delta>0$ and $n_0$ such that, for all $n \geq n_0$, the following holds.
Every family $\mathcal{F}\subseteq \binom{[n]}{k}$ which spans at most $\delta |\mathcal{F}|^2 (D/N)$  non-intersecting pairs can be made intersecting by removing at most $\varepsilon\binom{n-1}{k-1}$ elements from $\mathcal{F}$.
\end{proposition}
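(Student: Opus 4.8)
Write $N=\binom nk$, $D=\binom{n-k}k$, $\lambda=k/n$ (so that $\binom{n-1}{k-1}=\lambda N$), and $\mu=|\cF|/N$. Viewing $\cF$ as a vertex set of the Kneser graph $K(n,k)$, the number of disjoint (non\nobreakdash-intersecting) pairs inside $\cF$ is exactly $e(\cF)$, the number of edges of $K(n,k)$ spanned by $\cF$; since $e(K(n,k))=ND/2$, the hypothesis reads $e(\cF)\le\delta\mu^2ND$. The plan is to prove the structural statement that \emph{such an $\cF$ lies, up to at most $\varepsilon\binom{n-1}{k-1}$ exceptions, inside a single star $\cF_j$}; deleting $\cF\setminus\cF_j$ then leaves a subfamily of $\cF_j$, which is intersecting. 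Two reductions are immediate. If $\mu\le\varepsilon\lambda$ then $|\cF|\le\varepsilon\binom{n-1}{k-1}$ and we may delete all of $\cF$, so assume $\mu>\varepsilon\lambda$ (in particular $\mu=\Theta(1)$, since $\lambda\ge\beta$). On the other hand, Lemma~\ref{lem:supersaturation} applied with any $\tau>2\delta/(1-2\delta)$, combined with $e(\cF)\le\delta\mu^2ND$, forces $|\cF|<(1+\tau)\lambda N$; hence $\mu\le\tfrac1{1-2\delta}\lambda=(1+O(\delta))\lambda$. So we may assume $\varepsilon\lambda<\mu\le(1+O(\delta))\lambda$.

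For the heart of the argument I would work in the Johnson association scheme on $\binom{[n]}k$. Let $A$ be the adjacency matrix of $K(n,k)$, and decompose $f=\mathbf 1_{\cF}=\sum_{d=0}^k f_{=d}$ along the eigenspaces $V_d$ of $A$, with eigenvalues $\lambda_d=(-1)^d\binom{n-k-d}{k-d}$; the relevant facts are $\lambda_0=D$, $\lambda_1=-\tfrac k{n-k}D$, $\lambda_2>0$, $|\lambda_d|$ strictly decreasing in $d$, and $\langle f,Af\rangle=\sum_d\lambda_d\|f_{=d}\|^2=2e(\cF)$. Using $\|f_{=0}\|^2=\mu^2N$, $\sum_{d\ge1}\|f_{=d}\|^2=\mu(1-\mu)N$, and $\sum_{d\ge2}\lambda_d\|f_{=d}\|^2\ge-|\lambda_3|\sum_{d\ge2}\|f_{=d}\|^2$, one gets
\[
2\delta\mu^2ND\ \ge\ \langle f,Af\rangle\ \ge\ D\mu^2N-|\lambda_1|\,\mu(1-\mu)N+(|\lambda_1|-|\lambda_3|)\,m,
\]
where $m:=\sum_{d\ge2}\|f_{=d}\|^2$. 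Rearranging yields $m\le\tfrac{\mu N}{1-\theta^2}\bigl((1-\mu/\lambda)+O(\delta)\bigr)$ with $\theta:=\tfrac k{n-k}$, and here $1-\theta^2$ is bounded below by a positive constant depending only on $\beta$, precisely because $k\le(1/2-\beta)n$. Thus \emph{when $\mu=(1-o_\delta(1))\lambda$} almost all of the Fourier mass of $f$ lies on levels $0$ and $1$, equivalently $\|f_{=1}\|^2\ge(1-o_\delta(1))\lambda(1-\lambda)N$, which is $(1-o_\delta(1))$ times the maximum level-$1$ mass of any Boolean function of density $\mu$. By the classification of Boolean degree-$\le1$ functions on the slice (Filmus) together with its FKN-type stability version (O'Donnell--Wimmer; Keller--Lifshitz; Filmus and others), $f$ is then $o_\delta(1)$-close in $L^2(\binom{[n]}k)$ to one of $0$, $1$, $\mathbf 1_{\cF_j}$, $\mathbf 1_{\overline{\cF_j}}$; the constants are excluded by $\mu=\Theta(1)$ and $\mu<1$, and $\mathbf 1_{\overline{\cF_j}}$ (density $1-\lambda\ge\tfrac12+\beta$) by $\mu\le(1+O(\delta))\lambda<1-\lambda$. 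Hence $|\cF\triangle\cF_j|=o_\delta(1)\cdot N$, so for $\delta$ small enough in terms of $\varepsilon$ and $\beta$ we obtain $|\cF\setminus\cF_j|\le\varepsilon\binom{n-1}{k-1}$, which settles the near-maximum case.

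It remains to handle the range $\varepsilon\lambda<\mu\le(1-o_\delta(1))\lambda$, and this is the step I expect to be the main obstacle, and the reason Friedgut--Regev needs a genuinely new idea here: the spectral bound above does \emph{not} force concentration on levels $0,1$ once $\mu$ is bounded away from $\lambda$ (for instance a dense random subset of a star spreads its Fourier mass across all levels), so Booleanity must be exploited beyond the eigenvalue inequality. My plan would be to \emph{augment} $\cF$ to a family $\cF^{+}\supseteq\cF$ of size $(1-o_\delta(1))\binom{n-1}{k-1}$ that still spans at most $O(\delta)(|\cF^{+}|/N)^2e(K(n,k))$ disjoint pairs, apply the near-maximum analysis to $\cF^{+}$ to produce a star $\cF_j$ with $|\cF^{+}\setminus\cF_j|\le\varepsilon\binom{n-1}{k-1}$, and conclude from $|\cF\setminus\cF_j|\le|\cF^{+}\setminus\cF_j|$; an alternative would be a direct ``junta'' argument showing that $\cF$ is controlled by a bounded set of coordinates and reducing to that case. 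Carrying out such an augmentation (or junta reduction) while keeping the number of disjoint pairs within budget, uniformly over $\beta n\le k\le(1/2-\beta)n$ --- which again relies on the normalized eigenvalue $|\lambda_1|/D=\tfrac k{n-k}$ staying bounded away from $1$, exactly what degenerates as $k\to n/2$ --- is where the real work lies.
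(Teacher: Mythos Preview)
The paper does not prove this proposition at all: it is quoted verbatim from Friedgut--Regev (note the citation in the statement header) and used as a black box in the proof of Lemma~\ref{lem:robuststablity}. So there is no ``paper's own proof'' to compare your attempt against.

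On the substance of your sketch: the near-maximum case ($\mu=(1-o_\delta(1))\lambda$) is essentially Friedgut's stability argument \cite{Friedgut} with a removal step bolted on, and your spectral computation is correct as far as it goes. But the heart of the Friedgut--Regev theorem is precisely the range $\varepsilon\lambda<\mu\le(1-c)\lambda$ that you defer, and here your proposal has a genuine gap. Your ``augmentation'' plan---enlarge $\cF$ to a family $\cF^{+}$ of near-maximum size while keeping the disjoint-pair count under control---is not an argument: you give no mechanism for choosing the added sets, and a moment's thought shows that adding \emph{arbitrary} sets outside a star can create $\Theta(|\cF|)$ new disjoint pairs per added set, so the budget is not obviously preserved. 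The actual Friedgut--Regev proof proceeds by a junta-approximation theorem for almost-intersecting families (building on earlier work of Dinur--Friedgut--Regev on independent sets in graph powers and on hypercontractive methods), which is a substantially different and heavier technique than anything in your outline. In short, you have correctly located where the difficulty lies, but not resolved it.
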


\begin{proof}[Proof of Lemma~\emph{\ref{lem:robuststability}}]
Given any $\varepsilon>0$, first let $\varepsilon_2 = \varepsilon/2$, and apply {Proposition \ref{p:ekrstability}} to get a corresponding $\delta_2 = \delta_2(\varepsilon_2)>0$. Now set $\varepsilon_1 = \min(\varepsilon/2, \delta_2/2)$, and use this time {Proposition \ref{p:ekrremovallemma}} in order to obtain an appropriate $\delta_1 = \delta_1(\varepsilon_1) > 0$. Finally, set $\delta = \min(\delta_1, \delta_2/2) = \delta(\varepsilon) > 0$. 

It follows that for any family $\mathcal{F}$ with $|\mathcal{F}| \geq (1 - \delta) \binom{n-1}{k-1}$ and {$e(\mathcal{F}) \leq \delta (|\mathcal{F}|/N)^2 (ND/2) \leq \delta_1 |\mathcal{F}|^2 (D/N)$} there exists an intersecting family $\mathcal{F}' \subseteq \mathcal{F}$ obtained from $\mathcal{F}$ by removing at most $\varepsilon_1 \binom{n-1}{k-1}$ of its elements such that
$$
|\mathcal{F}'| \, \geq  \, (1 - \delta - \varepsilon_1) \binom{n-1}{k-1} \, \geq \, (1 - \delta_2) \binom{n-1}{k-1}. 
$$
In addition, Proposition \ref{p:ekrstability} implies that for some $i \in [n]$, we have $|\mathcal{F}' \setminus \mathcal{F}_i| \leq \varepsilon_2 \binom{n-1}{k-1}$. Therefore, 
$$
|\mathcal{F} \setminus \mathcal{F}_i| \, \leq \, |\mathcal{F} \setminus \mathcal{F}'| + |\mathcal{F}' \setminus \mathcal{F}_i| \, \leq \, \varepsilon_1 \binom{n-1}{k-1} + \varepsilon_2 \binom{n-1}{k-1}  \,
\leq \, \varepsilon \binom{n-1}{k-1}, \nonumber
$$
which completes the proof.
\end{proof}

\section{Proof of Proposition~\ref{prop:transference}}\label{sec:transference}

We begin with the proof of a simple structural result for independent sets in graphs (Lemma \ref{lem:greedyalg}). For a given graph $G$, let $\mathcal{I}_G(t)$ denote the set of 
independent sets of $G$ of size exactly $t$, and $\mathcal{I}_G$ denote the set of all 
independent sets in $G$.

\begin{lemma}
\label{lem:greedyalg}
Let $G$ be a graph on $N$ vertices, and $\gamma>0$ be an arbitrary real number. In addition, let $0< \ell < t$ be integers.
Then, for every independent set $I \subset V(G)$ of size at least $t$, there is a sequence of vertices  
$x_1,\dots,x_\ell\in I$ and a  sequence of subsets 
$V(G)\supseteq X_1\supseteq \dots\supseteq X_\ell$ depending only on $x_1,\dots,x_\ell$ such that:
\begin{itemize}
\item $x_1,\dots,x_i\not\in X_{i}$ for all $i\leq \ell$,
\item $I\setminus\{x_1,\dots,x_{i}\}\subset X_i$ for all $i\leq \ell$.
\end{itemize}
Moreover, we have either
\begin{itemize}
\item[\emph{(}i\emph{)}] $|X_i|\leq\left(1-2\gamma\frac{e(G)}{N^2}\right) |X_{i-1}|  \text{ for all } 1 \leq i\leq \ell$, or
\item[\emph{(}ii\emph{)}] $e(G[X_i])<\gamma\frac{|X_i|^2}{N^2}e(G) \text{ for some } 1 \leq i\leq\ell$.
\end{itemize}    
\end{lemma}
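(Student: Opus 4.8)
The idea is a greedy "min-degree deletion" process, à la Kleitman–Winston, run with respect to a fixed ordering that the process itself discovers. I will build the $x_i$ and $X_i$ one step at a time. Set $X_0 = V(G)$. Given $X_{i-1}$ (which, by induction, depends only on $x_1,\dots,x_{i-1}$ and contains $I \setminus \{x_1,\dots,x_{i-1}\}$), I look at the subgraph $G[X_{i-1}]$. If its edge density is already small, i.e. $e(G[X_{i-1}]) < \gamma \frac{|X_{i-1}|^2}{N^2} e(G)$, I stop and declare that alternative (ii) holds (at index $i-1$, or I pad the sequence arbitrarily from $I$ to reach length $\ell$ while freezing $X_j := X_{i-1}$ — the bookkeeping here is minor). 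Otherwise the average degree in $G[X_{i-1}]$ is at least $2\gamma \frac{|X_{i-1}|}{N^2} e(G)$, so there is a vertex $v \in X_{i-1}$ with degree (within $G[X_{i-1}]$) at least this average. Crucially, $v$ is determined by $X_{i-1}$ alone — say, the max-degree vertex, breaking ties by a fixed global vertex order — so it does not depend on $I$.

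Now I split on whether $v \in I$. Since $I$ is independent, $I$ contains at most $|X_{i-1}| - \deg_{G[X_{i-1}]}(v)$ vertices of $X_{i-1}$ that are non-neighbours of $v$ together with $v$ itself... more precisely: if $v \in I$, set $x_i := v$ and $X_i := X_{i-1} \setminus (\{v\} \cup N_{G[X_{i-1}]}(v))$; if $v \notin I$, set $x_i := (\text{the smallest-indexed vertex of } I \cap X_{i-1} \text{ in the global order}$, or any canonical choice) — wait, $x_i$ must lie in $I$, but $X_i$ must depend only on $x_1,\dots,x_i$, so I should instead, in the case $v\notin I$, still want $X_i$ to shrink by the same factor. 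The cleaner route: regardless of whether $v\in I$, define $x_i$ to be $v$ if $v\in I$ and otherwise the vertex of $I\cap X_{i-1}$ that is "first" in the order; then set $X_i := X_{i-1}\setminus(\{v\}\cup N_{G[X_{i-1}]}(v))$ when $x_i=v$, and $X_i := X_{i-1}\setminus\{v\}$ when $x_i\neq v$. In the former case $|X_i| \le |X_{i-1}| - \deg(v) \le (1 - 2\gamma \frac{e(G)}{N^2})|X_{i-1}|$, giving alternative (i) at step $i$; in the latter case I have removed a vertex $v\notin I$ from $X_{i-1}$, so $I\setminus\{x_1,\dots,x_i\}\subseteq X_i$ still holds, but the shrink factor is not guaranteed. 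To fix this, note $v\notin I$ means we may simply iterate "remove $v$" — but that breaks the "$\ell$ steps" counting. The honest fix, and the one I expect the paper uses: when $v\notin I$, keep deleting the current max-degree vertex (none of which are in $I$, as long as they get deleted as high-degree vertices not chosen) — no; the standard trick is that the process removes $v$ and \emph{all its neighbours} and sets $x_i=v$ whenever $v\in I$, and when $v\notin I$ it just removes $v$ (a single vertex) \emph{without advancing the counter $i$}. So the $\ell$ steps count only the "$v\in I$" rounds. Each such round either shrinks $X$ by the factor $(1-2\gamma e(G)/N^2)$ or we have already hit the low-density regime.

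So the correct structure is: run the deletion process; at each moment pick the canonical max-degree vertex $v$ of the current set $X$; if the current $X$ has few edges, stop with (ii); else if $v\in I$, record it as the next $x_i$, delete $v$ and its neighbours, and this is a "counted" step satisfying (i); else delete only $v$ and continue without incrementing $i$. The two itemized properties ($x_1,\dots,x_i\notin X_i$ and $I\setminus\{x_1,\dots,x_i\}\subseteq X_i$) are immediate invariants: deleted vertices never return, and we only ever delete a vertex of $I$ when it is some $x_j$ (deleting neighbours of $x_j\in I$ is safe precisely because $I$ is independent), while deleting non-$I$ vertices is harmless. Since $|X|$ strictly decreases at every deletion and $I\subseteq X$ always up to the removed $x_j$'s, and $|I|\ge t>\ell$, the process produces at least $\ell$ counted steps unless it terminates earlier in the low-density branch — either way we reach length $\ell$ with (i) holding at all counted steps or (ii) holding at some step. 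The main obstacle, and the only real subtlety, is exactly this bookkeeping: making $X_i$ a function of $x_1,\dots,x_i$ alone (achieved because the max-degree pivot is canonical and the "uncounted" deletions are also canonical), and correctly interleaving the uncounted single-vertex deletions with the counted neighbourhood-deletions so that the length-$\ell$ claim and the per-step shrink estimate both hold. Once that is set up, the degree bound $\deg_{G[X_{i-1}]}(v)\ge 2\gamma\frac{|X_{i-1}|}{N^2}e(G)$ in the high-density case is a one-line averaging argument.
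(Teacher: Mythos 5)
Your final algorithm is essentially the paper's proof: the paper packages your iterative max-degree deletions as a ``max-ordering'' of $X_{i-1}$ computed up front, takes $x_i$ to be the first vertex of $I$ in that ordering, and sets $X_i$ to be the tail beyond $x_i$ (minus $N(x_i)$ when $\deg(x_i,\text{tail})$ is large), which makes the ``$X_i$ depends only on $x_1,\dots,x_i$'' invariant transparent. The one piece of bookkeeping you flagged — padding to length $\ell$ after hitting the low-density branch — is handled in the paper by not stopping at all: when the degree of $x_i$ into the tail is small, it simply sets $X_i$ to that (provably sparse) tail and continues the loop, so all $\ell$ rounds are produced without any ad hoc padding.
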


\begin{proof}
Fix an independent set $I$ of size at least $t$. 
We need to define the required sequences $x_1,\dots,x_{\ell}$ and 
$X_1,\dots,X_{\ell}$. Assume that we have already chosen elements 
$x_1,\ldots, x_{i-1}\in~I$ and sets 
$V(G) = X_0\supset X_1\supset\ldots\supset X_{i-1}$ satisfying the conditions of our result. Observe that initially no element has been selected, and for convenience we set $X_0 = V(G)$.

Consider an ordering $(v_1,\dots,v_{|X_{i-1}|})$ of the vertices in $X_{i-1}$
which satisfies 
\[ |N(v_i)\cap\{v_{i+1},\ldots,v_{|X_{i-1}|}\}|\geq |N(v_j)\cap\{v_{i+1},\ldots,v_{|X_{i-1}|}\}|\]
for all $i<|X_{i-1}|$ and all $j>i$. Such an ordering clearly exists, since one can repeatedly choose (and remove) the 
vertex with highest degree in the remaining graph. In this case we say that this is a \emph{max-ordering} of the elements in $X_{i-1}$.

Let $j$ be the smallest index such that the vertex $v_j$ in the max-ordering of $X_{i-1}$
is contained in $I$. Such index must exist, since $I\setminus\{x_1,\dots x_{i-1}\}\subseteq X_{i-1}$ and $i-1<\ell<t\leq |I|$. 
We define $x_i=v_j$, and set $S=X_{i-1}\setminus \{v_1,\dots,v_j\}$.

If $\deg(v_j,S)<2\gamma |S|e(G)/N^2$ then we let $X_i=S$. Note 
that, due to the max-ordering and the definition of $v_j$, {every vertex $v\in X_i=\{v_{j+1},\ldots,v_{|X_{i-1}|}\}$ satisfies $\deg(v,X_i)\leq\deg(v_j,X_i)$.This  implies that} the number of edges in $X_i$ satisfies $e(X_i)<\gamma |X_i|^2e(G)/N^2$.
Otherwise, i.e.\ for the case $\deg(v_j,S)\geq 2\gamma |S|e(G)/N^2$, we let 
$X_i=S\setminus N(v_j)$. Then,
\[|X_i|\leq |S|-\deg(v_j,S)=\left(1-2\gamma\frac{e(G)}{N^2}\right)|S|\leq \left(1-2\gamma\frac{e(G)}{N^2}\right)|X_{i-1}|.\]
Finally, observe that it follows from the definition of $v_j$ that we always have $I\setminus\{x_1,\dots,x_{i}\}\subset X_i$, which completes the proof. 
\end{proof}

From this lemma we immediately deduce the following corollaries.

\begin{corollary}\label{cor:supersaturation_ind}
Let $G = (V,E)$ be a fixed $(\lambda$,$\gamma)$-supersaturated graph on $N$ vertices with average degree $D$, where $\lambda, \gamma>0$. Let $t \geq 1$, and $\ell$ be an integer such that $0 < \ell < t$. Finally, set
\[\nu=\nu(\ell)=\max\left\{\left(1- \gamma \frac{D}{N}\right)^{\ell},\lambda\right\}.\]
Then, for every independent set $I\in \mathcal{I}_G(t)$, there exists a subset $L\subset I$ of size $\ell$ and a set $P(L)$, depending only on $L$, of size at most $\nu N$ such that $I\setminus L \subset P(L)\subset V(G)$. Further, we have $L \cap P(L) = \emptyset$. In particular, it follows that $|\mathcal{I}_G(t)|\leq \binom{N}{\ell}\binom{\nu N}{t-\ell}.$
\end{corollary}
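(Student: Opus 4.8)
\textbf{Proof proposal for Corollary~\ref{cor:supersaturation_ind}.}
The plan is to apply Lemma~\ref{lem:greedyalg} directly with the given $\ell$ and $t$, and then package the output into the stated form. So fix an independent set $I \in \mathcal{I}_G(t)$. Lemma~\ref{lem:greedyalg} supplies vertices $x_1,\dots,x_\ell \in I$ and nested sets $V(G) = X_0 \supseteq X_1 \supseteq \dots \supseteq X_\ell$, with $X_i$ depending only on $x_1,\dots,x_i$, such that $x_1,\dots,x_i \notin X_i$ and $I \setminus \{x_1,\dots,x_i\} \subset X_i$ for all $i \le \ell$. I will set $L = \{x_1,\dots,x_\ell\}$ and $P(L) = X_\ell$. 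Then $L$ depends on $I$ but $P(L)$ depends only on $L$ (this is exactly the ``depending only on $x_1,\dots,x_\ell$'' clause of the lemma, applied at step $\ell$). The containment $I \setminus L \subset P(L) \subset V(G)$ is immediate from the second bullet of the lemma at $i = \ell$, and $L \cap P(L) = \emptyset$ follows from the first bullet, since each $x_i \notin X_\ell \subseteq X_i$. One small point to check: the lemma as stated requires $|L| = \ell$, which forces the $x_i$ to be distinct; this holds because $x_i$ is chosen from $X_{i-1}$ while $x_1,\dots,x_{i-1} \notin X_{i-1}$.

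The only real content is the size bound $|P(L)| \le \nu N$. Here I split on the dichotomy in the ``Moreover'' part of Lemma~\ref{lem:greedyalg}. In case (ii) there is some $i \le \ell$ with $e(G[X_i]) < \gamma (|X_i|/N)^2 e(G)$; since $G$ is $(\lambda,\gamma)$-supersaturated, this forces $|X_i| < \lambda N$, and then $|X_\ell| \le |X_i| < \lambda N \le \nu N$. In case (i) we have $|X_i| \le (1 - 2\gamma e(G)/N^2)|X_{i-1}|$ for every $1 \le i \le \ell$; iterating from $X_0$ with $|X_0| = N$ and using $2e(G) = DN$ (so $2\gamma e(G)/N^2 = \gamma D/N$), we get
\[
|X_\ell| \le \left(1 - \gamma \frac{D}{N}\right)^{\ell} N \le \nu N.
\]
In both cases $|P(L)| = |X_\ell| \le \nu N$, as desired. (The case distinction in the lemma is stated as ``either (i) holds for all $i$, or (ii) holds for some $i$'', so these two cases are exhaustive.)

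For the counting consequence: every $I \in \mathcal{I}_G(t)$ is determined by the pair $(L, I \setminus L)$, where $L$ ranges over $\ell$-subsets of $V(G)$ — at most $\binom{N}{\ell}$ choices — and, once $L$ is fixed, $P(L)$ is determined, and $I \setminus L$ is a $(t-\ell)$-subset of the set $P(L)$ of size at most $\nu N$ — at most $\binom{\nu N}{t-\ell}$ choices (using monotonicity of $\binom{m}{t-\ell}$ in $m$, and reading $\binom{\nu N}{t-\ell}$ with the usual convention $\nu N$ rounded as needed, or simply $\binom{\lfloor \nu N\rfloor}{t-\ell} \le \binom{\nu N}{t-\ell}$ in the generalized binomial sense). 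This gives $|\mathcal{I}_G(t)| \le \binom{N}{\ell}\binom{\nu N}{t-\ell}$.

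I do not expect a serious obstacle here: the corollary is essentially a repackaging of Lemma~\ref{lem:greedyalg}. The one place requiring a little care is making sure the map $I \mapsto (L, I\setminus L)$ is well-defined for the counting bound — i.e., that knowing $L$ alone (not $I$) pins down $P(L)$ — which is precisely why the lemma was phrased with ``$X_i$ depending only on $x_1,\dots,x_i$'', and why we must record $L$ as an \emph{ordered-or-unordered} set consistently; taking $L$ unordered and noting $P(L)$ depends only on the underlying set (as $X_\ell$ is built by a deterministic procedure from $x_1,\dots,x_\ell$ in the order they were produced, which the set $L$ together with the procedure recovers) suffices. Everything else is routine.
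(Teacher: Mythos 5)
Your proof is correct and follows essentially the same route as the paper: apply Lemma~\ref{lem:greedyalg}, set $L=\{x_1,\dots,x_\ell\}$ and $P(L)=X_\ell$, and split on the dichotomy of the lemma (geometric decay of $|X_i|$ versus hitting a low-density $X_i$, where supersaturation kicks in to give $|X_i|\leq\lambda N$). Your extra remark about recovering the production order from the unordered set $L$ (so that $P(L)$ is genuinely a function of $L$, as needed for the $\binom{N}{\ell}\binom{\nu N}{t-\ell}$ count) is a valid point that the paper's terse proof glosses over, and your sketch of why it works is sound.
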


\begin{proof}
Given $I\in \mathcal{I}_G(t)$, we apply Lemma \ref{lem:greedyalg}
to obtain a sequence of vertices $x_1,\dots,x_\ell$ 
and sets $V = X_0,X_1, \dots , X_\ell$, as stated. Now set 
$L=\{x_1,\dots,x_\ell\}$ and $P(L)=X_\ell$, and observe that
$I\setminus L\subset P(L)$ and $L\cap P(L)=\emptyset$.

If $|X_i|\leq\left(1-2\gamma\frac{e(G)}{N^2}\right) |X_{i-1}|$ for all 
$i\leq \ell$, then $|P(L)|\leq \left(1-2\gamma \frac{e(G)}{N^2}\right)^\ell N$. In other words, $|P(L)|\leq \left(1-\gamma \frac{D}{N}\right)^\ell N$. On the other hand, if $e(X_i)<\gamma\frac{|X_i|^2}{N^2}e(G)$ for some  $i\leq\ell$,
then $|P(L)|\leq \lambda N$, since by assumption $G$ is $(\lambda,\gamma)$-supersaturated. Altogether, it follows that $|P(L)| \leq \nu N$, which completes the proof.
\end{proof}

\begin{corollary}
\label{cor:stability_ind}
Let $\lambda, \varepsilon,\delta >0$ and $G = (V,E)$ be graph on $N$
vertices which is $(\lambda, \mathcal{B})$-stable with respect to 
$(\varepsilon, \delta)$. 
Let  {$t > \ell \geq \ln\left(\frac{1}{(1-\delta)\lambda}\right)\frac{N^2}{2 \delta e(G)}$}. 
Then, for every independent set $I\in \mathcal{I}_G(t)$, there exists a subset 
$L\subset I$ of size $\ell$ and a set $P(L)\subset V(G)$ depending only on 
$L$ such that $I\setminus L \subset P(L)$ and $L \cap P(L) = \emptyset$. 
Furthermore, either
\begin{itemize}
\item $|P(L)|\leq (1-\delta)\lambda N$, or
\item $|P(L)\setminus B|\leq \varepsilon \lambda N$ for some $B\in \mathcal{B}$.
\end{itemize}
\end{corollary}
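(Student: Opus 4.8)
The plan is to mimic the proof of Corollary~\ref{cor:supersaturation_ind}, but feed into Lemma~\ref{lem:greedyalg} the parameter $\delta$ (in place of $\gamma$) and exploit the $(\lambda,\mathcal B)$-stability hypothesis in the branch where Lemma~\ref{lem:greedyalg}(ii) occurs. Concretely, fix an independent set $I\in\mathcal I_G(t)$ and apply Lemma~\ref{lem:greedyalg} with this value of $\gamma:=\delta$ and with $\ell$ as in the statement, obtaining vertices $x_1,\dots,x_\ell\in I$ and nested sets $V(G)=X_0\supseteq X_1\supseteq\dots\supseteq X_\ell$ with $x_1,\dots,x_i\notin X_i$ and $I\setminus\{x_1,\dots,x_i\}\subset X_i$ for all $i\le\ell$. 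Setting $L=\{x_1,\dots,x_\ell\}$ and $P(L)=X_\ell$, the containment $I\setminus L\subset P(L)$ and the disjointness $L\cap P(L)=\emptyset$ are immediate, and crucially $P(L)$ depends only on $L$ since Lemma~\ref{lem:greedyalg} guarantees the $X_i$ depend only on $x_1,\dots,x_i$.

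It remains to verify the dichotomy on $|P(L)|$. If alternative~(i) of Lemma~\ref{lem:greedyalg} holds, then $|P(L)|=|X_\ell|\le\bigl(1-2\delta\frac{e(G)}{N^2}\bigr)^\ell N\le\exp\bigl(-2\delta\frac{e(G)}{N^2}\ell\bigr)N$, and by the choice $\ell=\ln\!\bigl(\frac1{(1-\delta)\lambda}\bigr)\frac{N^2}{2\delta e(G)}$ this is exactly $(1-\delta)\lambda N$, giving the first bullet. If instead alternative~(ii) holds, then there is some $i\le\ell$ with $e(G[X_i])<\delta\frac{|X_i|^2}{N^2}e(G)$; since $X_\ell\subseteq X_i$ it suffices to control $X_i$, but one must first ensure $|X_i|\ge(1-\delta)\lambda N$ so that the stability hypothesis applies — and if $|X_i|<(1-\delta)\lambda N$ then a fortiori $|P(L)|\le|X_i|<(1-\delta)\lambda N$, landing in the first bullet again. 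So assume $|X_i|\ge(1-\delta)\lambda N$; then $(\lambda,\mathcal B)$-stability with respect to $(\varepsilon,\delta)$, applied to $S=X_i$, forces (since the edge-count alternative fails) the existence of $B\in\mathcal B$ with $|X_i\setminus B|\le\varepsilon\lambda N$, whence $|P(L)\setminus B|\le|X_i\setminus B|\le\varepsilon\lambda N$, which is the second bullet.

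The main point requiring care is the bookkeeping in the stability branch: we only get information about the particular set $X_i$ flagged by Lemma~\ref{lem:greedyalg}(ii), not about $X_\ell=P(L)$ directly, so we must use monotonicity $P(L)\subseteq X_i$ together with the observation that if $X_i$ is too small we are already done, and only otherwise invoke the stability definition. A secondary subtlety is that the stability definition in Definition~\ref{def:stability} is stated for sets of size $\ge(1-\delta)\lambda N$ with the \emph{same} $\delta$ used as the "robustness" parameter in the edge bound $e(S)\ge\delta(|S|/N)^2 e(G)$, which matches the $\gamma=\delta$ we plugged into Lemma~\ref{lem:greedyalg}; one should double-check the constant $2$ discrepancy between Lemma~\ref{lem:greedyalg}(ii)'s threshold $\gamma|X_i|^2 e(G)/N^2$ and the stability threshold $\delta(|X_i|/N)^2 e(G)$ — with $\gamma=\delta$ these coincide, so no loss occurs. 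Everything else is routine, paralleling the proof of Corollary~\ref{cor:supersaturation_ind}.
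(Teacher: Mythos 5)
Your proof is correct and follows the same overall outline as the paper's: apply Lemma~\ref{lem:greedyalg} with $\gamma=\delta$, split on the two alternatives, handle the first by the choice of $\ell$, and handle the second via the stability hypothesis (after checking the size threshold). But you make one choice that differs from the paper and is actually the better one. The paper, in the branch where Lemma~\ref{lem:greedyalg}(ii) fires at some index $j$, sets $P(L) = X_j$; you instead keep $P(L)=X_\ell$ and transfer the size/stability information from $X_j$ to $X_\ell$ by monotonicity. This matters for the disjointness assertion $L\cap P(L)=\emptyset$: by the second bullet of Lemma~\ref{lem:greedyalg}, $I\setminus\{x_1,\dots,x_j\}\subset X_j$, so when $j<\ell$ the vertices $x_{j+1},\dots,x_\ell$ all lie in $X_j$, meaning $L\cap X_j\ne\emptyset$ and the paper's choice $P(L)=X_j$ does not literally satisfy the disjointness clause of the corollary. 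Your choice $P(L)=X_\ell$ satisfies it automatically (since $x_1,\dots,x_\ell\notin X_\ell$), and — as you correctly observe — $X_\ell\subseteq X_j$ means both the size bound and the bound $|X_\ell\setminus B|\le|X_j\setminus B|$ carry over, so nothing is lost. Your bookkeeping in that branch (first check whether $|X_j|$ is already below $(1-\delta)\lambda N$, and only otherwise invoke stability) is exactly the right case distinction, and the observation that $\gamma=\delta$ makes the threshold in Lemma~\ref{lem:greedyalg}(ii) coincide with the one in Definition~\ref{def:stability} is also correct. In short, same strategy, but a cleaner instantiation of $P(L)$ that genuinely repairs a small slip in the paper's argument.
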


\begin{proof}
We apply Lemma \ref{lem:greedyalg} to $G$ using $\gamma = \delta$ in order to obtain a sequence of vertices $x_1, \ldots, x_\ell$ and subsets $X_1, \ldots, X_\ell$ with the desired properties. Let $L = \{x_1, \ldots, x_\ell\}$. If $|X_i|\leq\left(1-2\delta\frac{e(G)}{N^2}\right) |X_{i-1}|$ for all 
$i\leq \ell$, then set $P(L)=X_\ell$. Using our assumption on $\ell$, we get $|P(L)| \leq \left(1-2\delta\frac{e(G)}{N^2}\right)^\ell N\leq(1-\delta)\lambda N$. Otherwise, pick the smallest index $j \leq \ell$ such that $e(G[X_j]) < \delta \frac{|X_j|^2}{N^2} e(G)$, and let $P(L) = X_j$. Again, if $|P(L)| \leq (1 - \delta) \lambda N$ we are done. On the other hand, if this condition does not hold we deduce from the $(\lambda, \mathcal{B})$-stability of $G$ that there exists some $B \in \mathcal{B}$ for which $|P(L) \setminus B| \leq \varepsilon \lambda N$, which completes the proof.
\end{proof}

Now that we have all the necessary machinery, we proceed with the proof of Proposition \ref{prop:transference}.

\begin{proof}[Proof of Proposition~\emph{\ref{prop:transference}}]

Let $\lambda = \lambda(n)$, $\gamma = \gamma(n)$, and 
$\bG=\{G_n\}_{n \in \mathbb{N}}$ be a sequence of graphs. 
For a given $0 < \varepsilon < 1$, let 
$C_1=800/\varepsilon^3$. For the proof of case (\emph{iv}) in Proposition~\ref{prop:transference}, 
suppose that $\bG$ is $(\lambda,\cB)$-stable for some $\cB$. Then
for $\eps'=\eps/2$, there is a constant $\delta' > 0$ and $n_1 \in \mathbb{N}$ such that, for all $n \geq n_1$,
the graph $G_n$ is $(\lambda, \cB_n)$-stable with respect to $(\eps',\delta')$.
We choose $\delta=\min\{\delta'/4,\eps'/{16},1/16\}$ and $C_2=100/\delta^4$. Finally, set $C=\max\{C_1,C_2\}$, and let $n_0 \geq n_1$ be sufficiently large.

We proceed with the proof of the first case of Proposition~\ref{prop:transference}.
Assume that $N^{-1} \ll p \ll D^{-1}$. 
Using the Chernoff bound (Lemma \ref{l:chernoff}),
we have almost surely $|V_p| = (1\pm\eps/2)pN$, which proves the upper bound. 
Further, we have $\ex(e(H_n))=\frac{1}{2}NDp^2$ and by Markov's inequality 
 a.a.s. $e(H_n) \leq \eps pN/2$ holds.
By deleting at most this number of vertices from $H_n$, we obtain an 
independent set of size at least $(1-\eps)pN$, which proves the lower bound.

For the second part, assume that $9D^{-1} \leq   p  \leq  \lambda^{\varepsilon}(\lambda\gamma D)^{-1}$.
Further, let $\ell = (1+\varepsilon)\frac{N}{\gamma D} \ln(pD)>0$, and $t =  \frac{4N}{\eps\gamma D} \ln(pD)$. 
Let $X$ be the random variable counting the number of independent sets of size exactly $t$ in $H_n$, i.e., $X = |\mathcal{I}_{H_n}(t)|$. 
By the choice of our parameters, Corollary \ref{cor:supersaturation_ind} applies, and we obtain:
$$
\mathbb{E}[X] \leq \binom{N}{\ell} \binom{\nu(\ell)N}{t - \ell}p^t,
$$
where $\nu(\ell) = \max\left\{\left(1- \gamma \frac{D}{N}\right)^{\ell},\lambda\right\}$. 
Using $\binom{n}k\leq \left(\frac{en}k\right)^k$ and the choice of $\ell$ and $t$, we get
$$
\binom{N}{\ell} \leq \left(\frac{e \gamma D}{\ln(pD)}\right)^\ell \qquad\text{ and }\qquad
\binom{\nu(\ell)N}{t - \ell} \leq \left (\frac{ e \nu(\ell)\gamma D}{\ln(pD)}  \right)^{t - \ell}.
$$
Combining both inequalities, and noting that our choice of $C$ guarantees that $\ell\leq \varepsilon t/2$, we get:
$$
\mathbb{E}[X]\leq \left(\frac{e\gamma p D \nu^{1-\varepsilon/2}}{\ln(pD)}\right)^{t}.
$$

In case $\nu(\ell)=\lambda$, we have  
$\gamma pD\nu^{1-\varepsilon/2}\leq \lambda^{\eps/2} \leq 1$, since
$p \leq \lambda^{\eps}(\lambda \gamma D)^{-1}$. On the other hand,
if $\nu(\ell) \leq e^{- \ell \gamma \frac{D}{N}} \leq (pD)^{-1-\varepsilon}$,
we have $\gamma pD\nu^{1-\varepsilon/2}\leq \gamma(pD)^{-\varepsilon/2+\varepsilon^2/2}\leq  \gamma \leq 1$ {since $\varepsilon < 1$}.
Hence, $\ex(X) \leq (e/\ln (pD))^t$, and the claim follows from Markov's inequality.

For the third part, assume that $p \geq C(\lambda\gamma D)^{-1}\ln^2 \left (\frac{e}{\lambda} \right )$. Let $t = (1 + \varepsilon) p \lambda N$, and $\ell = \frac{N}{\gamma D} \ln \left ( \frac{e}{\lambda}  \right)$. 
We need to upper bound the following probability:
$$
q = \mathbb{P}[\,\exists I \subset V_p,\,|I| = t,\, I~\text{is an independent set in}~G_n].
$$
It follows from Corollary \ref{cor:stability_ind} that for any $I \in \mathcal{I}_{G_n}(t)$, there exist $L \subset I$ of size $\ell$ and $P(L)$ such that $I \setminus L \subset P(L) \subset V$. Therefore,
$$
q \leq \sum_{L} \mathbb{P}[L \subset V_p~\text{and}~|V_p \cap P(L)| \geq t - \ell].
$$
where the sum is taken over all subsets $L \in \binom{V}{\ell}$ that correspond to some independent as given by Corollary \ref{cor:stability_ind}. 
Using the fact that $L$ and $P(L)$ are disjoint, we obtain
\begin{align}
\label{eq:probq}
q \leq \sum_L \mathbb{P}[L \subset V_p] \cdot \mathbb{P}[|V_p \cap P(L)| \geq t - \ell].
\end{align}

In addition, by our choice of $\ell$, it follows that $\nu(\ell) = \lambda$. Therefore, for any such $L$, we have $|P(L)| \leq \nu(\ell)N \leq \lambda N$. 
Further, the choice of $\ell$ and $p$ implies that $\ell \leq (\varepsilon/2) p \lambda N$.
Hence with $X=|V_p\cap P(L)|$, we have due to the Chernoff bound that
\begin{align*}
 \mathbb{P}(X \geq t - \ell)   
 \leq   \mathbb{P}\left(X \geq p |P(L)|+\frac{\eps p\lambda N}2\right) 
   \leq   \exp\left(-\frac{\eps^2 p\lambda N}{12}\right).
\end{align*} 
From \eqref{eq:probq} and $\binom{N}{\ell} \leq \left (\frac{eN}{\ell} \right )^{\ell}$, it follows  that:
\begin{eqnarray*}
 q  \leq  \left (\frac{eNp}{\ell} \right )^{\ell}\exp\left(-\frac{\eps^2 p\lambda N}{12}\right) 
   =  \exp \left (\ell \cdot \ln \left( \frac{eNp}{\ell} \right) - \frac{\eps^2p\lambda N}{12} \right ).
\end{eqnarray*}
{Recall that we want to prove that $q\leq \exp(-\varepsilon^2 p\lambda N/24)$. With the choice $\ell =\frac{N}{\gamma D}\ln(e/\lambda)$} it is now sufficient to show that {$\frac{1}{\gamma D}\ln(e/\lambda)\ln\left(\frac{ep\gamma D}{\ln(e/\lambda)}\right) \leq \frac{\varepsilon^2p\lambda}{24}$, or equivalently}
\[\frac{24}{\eps^2\gamma\lambda D} \ln (e/\lambda)\leq 
\frac{p}{\ln\left(\frac{ep\gamma D}{\ln (e/\lambda)}\right)}.\]

As the left hand side is independent of $p$, and the right hand side is increasing in $p$, 
it is sufficient to show the inequality for the endpoint
$p=C (\lambda \gamma D)^{-1}\ln^2(e/\lambda)$.
In this case the inequality follows from $24/\eps^2\leq C\ln(e/\lambda) /\ln \left (\frac{eC}{\lambda}\ln(e/\lambda) \right ).$ 
Note that $\ln \left (\frac{eC}{\lambda}\ln(e/\lambda) \right)>\ln (e/\lambda)+\ln C$,  since  $eC/\lambda>\ln (e/\lambda)$. Therefore the bound follows from
$48/\eps^2\leq C\ln (e/\lambda)/\big(\ln (e/\lambda)+\ln C\big)$, {or equivalently  $\frac{48}{\eps^2}\leq \frac{C}{1+\ln (C)/\ln (e/\lambda)}$. Since the right-hand side is decreasing in 
$\lambda$, it is sufficient to verify for $\lambda=1$}, which is immediate from the choice of $C_1$ and $C$.

\medskip

For the last part, let $p \geq C(\lambda D)^{-1}\ln^2 (e/\lambda)$.
Further, let \[\mathcal{T}=\{I\in \mathcal{I}_{G_n}\colon |I|> (1-\delta)\lambda p N \text{ and } |I\setminus B|> \varepsilon \lambda pN \text{ for all } B\in\mathcal{B}_n \}.\] Our task is to upper bound the value of 
\[
q_\mathcal{T}=\pr(\textrm{There is an independent set }I\subset V_p \textrm{ with } I\in \mathcal{T}).
\]

Recall our choice of $\eps'$, $\delta'$, and $n_0$, and that 
$G_n$ is $(\lambda,\mathcal{B}_n)$-stable with respect to $(\varepsilon',\delta')$ for every $n \geq n_0$.
We apply Corollary~\ref{cor:stability_ind} with $\eps'$, $\delta'$,  
$t=(1-4\delta)\lambda p N$, and {$\ell=\frac{N}{\delta D}\ln\frac{e}{\lambda}\leq \delta\lambda pN$. Note that this is a valid choice of $\ell$, since $\frac{N}{\delta D}\ln\frac{e}{\lambda}\geq\ln\left(\frac{1}{(1-\delta)\lambda}\right)\frac{N^2}{2\delta e(G)}$.} This implies that
for every  $I\in \mathcal{T}$  there is some $L=L(I)\subset I$ of size $\ell$ and some $P(L)\subset V(G_n)$, depending  only on $L$ and disjoint from $L$, such that 
$I\setminus L\subset P(L)$. 
Hence, if there is an $I\subset V_p$ with  $I\in \mathcal{T}$, then there is an $L$ of size $\ell$ with
\begin{enumerate}
\item[(A)]  $L\subset V_p$, and 
\item[(B)]  $|P(L)\cap V_p|\geq (1-\delta)p\lambda N-\ell\geq (1-2\delta)p\lambda N \,$ and 
 $\, |(P(L)\setminus B) \cap V_p|>\varepsilon\lambda p N{-\ell\geq \frac{3}{4}\varepsilon\lambda pN}$ for all $B\in\mathcal{B}_n$, {since $\delta\leq \varepsilon'/16 = \varepsilon/32$}.
\end{enumerate}

Let $q_{P(L)}$ {be} the probability that event (B) holds for the random set $V_p$. 
As $L$ and $P(L)$ are disjoint, we  have
\begin{align}
\label{eq:qT}
q_\mathcal{T}\leq \sum_{L}\mathbb{P}[L\subset V_p]\cdot q_{P(L)},
\end{align} 
where the sum ranges over all $L\in\binom{V}\ell$  corresponding to some $I$ as given by Corollary~\ref{cor:stability_ind}.

From Corollary~\ref{cor:stability_ind} and the chosen parameters, either $|P(L)|\leq(1-{\delta'})\lambda N$, or $|P(L)\setminus B|\leq {\varepsilon' \lambda N}$ for 
some $B\in\mathcal{B}_n$. Consider each of the cases separately.
If $|P(L)|\leq {(1 - \delta') \lambda N} \leq (1-4\delta)\lambda N$ then  Chernoff's bound (Lemma \ref{l:chernoff}) 
yields
\[\pr(|P(L)\cap V_p|\geq (1-2\delta)p\lambda N)\leq \exp\{-\delta^2\lambda p N\}.\]
Similarly, if $|P(L)\setminus B|\leq {\varepsilon' \lambda N} = \varepsilon \lambda N/2$ for some $B\in\mathcal{B}_n$, then, together with  $\delta\leq \eps/{32}$, we have
\[\pr\left(|(P(L)\setminus B) \cap V_p|>{\frac{3}{4}}\varepsilon\lambda p N\right) \leq\exp\{{-\eps \lambda p N/{48}}\}\leq \exp\{-\delta^2\lambda pN\}.\]
{Consequently, for every set $L$ as above we have $q_{P(L)} \leq \exp\{-\delta^2\lambda pN\}$.}

Hence \eqref{eq:qT} combined with $\binom{N}\ell\leq \left(\frac{eN}\ell\right)^\ell$ and the choice of $\ell =\frac{N}{\delta D}\ln(e/\lambda)$ yields
\[q_{\mathcal{T}} \;\leq\; \left(\frac{epN}{\ell}\right)^{\ell}\! \exp\{-\delta^2\lambda pN\}\;\leq\; 
 \exp\left\{\ell\ln\left(\frac{{e\delta p D}}{\ln (e/\lambda)}\right)- \delta^2\lambda pN\right\}.\]
To complete the proof it is suffices therefore to show that {$\ell\ln\left(\frac{{e\delta p D}}{\ln (e/\lambda)}\right)< \delta^2\lambda pN/2$, or equivalently}
\[\frac{{2}\ln (e/{\lambda})}{{\lambda\delta^3 D}}< 
\frac{p}{\ln \left(\frac{{e\delta pD}}{\ln(e/\lambda)}\right)}.\]
As the left hand side does not depend on $p$, and the right hand side is monotone increasing in
$p$, it is sufficient to verify this inequality for the endpoint $p=C(\lambda D)^{-1}\ln^2(e/\lambda)$. In 
this case and noting that  ${e \delta C}/\lambda>\ln (e/\lambda)$ due to our choice of $C_2$ and $C$, the claim follows from
\[\frac{{2}}{{\delta^3}}< \frac{C\ln(e/\lambda)}{\ln \left(\frac{{e \delta C}}{\lambda}\ln (e/\lambda)\right)}< \frac{C\ln(e/\lambda)}{2\ln\left(\frac{e\delta C}{\lambda}\right)}=\frac{C}{2+2\ln(\delta C)/\ln(e/\lambda)}.\]  
As the right-hand side is decreasing in $\lambda$ it is sufficient to verify for  $\lambda=1$ which, however, is immediate  from the choice of  $C$ and $C_2$. 
This completes the proof.
\end{proof}

\section{Concluding remarks}\label{s:concluding}
While this work was under review there has been a vivid interest in questions related to random versions of the 
Erd\H{o}s-Ko-Rado theorem~(cf. \cite{BDDLS,HammKahn1,HammKahn2, BNR,BBN,DevlinKahn, DasTran}).
In particular, besides the results of Balogh, Bohman, and Mubayi~\cite{BBM},  
the question concerning the structure of the largest intersecting family in the random setting has been addressed
in~\cite{HammKahn1,HammKahn2,BDDLS} for various ranges of $k$ and $p$.
Moreover, an extension of the robust stability result for intersecting families, Lemma~\ref{lem:robuststability}, has been considered in~\cite{DasTran}, implying
that Theorem~\ref{thm:stability} can be extended to a larger range of $k$. We refer to these papers for further information.

\section*{Acknowledgement}
The authors are grateful to Yoshiharu Kohayakawa for mentioning the problem and for several helpful comments.
Many thanks go to Oded Regev and Ehud Friedgut for sharing a preliminary version of their paper. Finally, we thank the anonymous referee for carefully reading the paper and for many suggestions that improved its readability.
\bibliographystyle{amsplain}	
\bibliography{ekr_lit}

\providecommand{\bysame}{\leavevmode\hbox to3em{\hrulefill}\thinspace}
\providecommand{\MR}{\relax\ifhmode\unskip\space\fi MR }
\providecommand{\MRhref}[2]{%
  \href{http://www.ams.org/mathscinet-getitem?mr=#1}{#2}
}
\providecommand{\href}[2]{#2}
\begin{thebibliography}{10}

\bibitem{AKS}
M.~Ajtai, J.~Koml{\'o}s, and E.~Szemer{\'e}di, \emph{A note on {R}amsey
  numbers}, J. Comb. Theory, Ser. A \textbf{29} (1980), no.~3, 354--360.

\bibitem{ADFS}
N.~Alon, I.~Dinur, E.~Friedgut, and B.~Sudakov, \emph{Graph products, {F}ourier
  analysis and spectral techniques}, Geometric and Functional Analysis
  \textbf{14} (2004), 913--940.

\bibitem{BBM}
J.~Balogh, T.~Bohman, and D.~Mubayi, \emph{Erd{\H{o}}s--{Ko}--{R}ado in random
  hypergraphs}, Combinatorics, Probability and Computing \textbf{18} (2009),
  no.~5, 629--646.

\bibitem{BBN}
J.~Balogh, B.~Bollob\'as, and B.~Narayanan, \emph{Transference for the
  {E}rd{\H{o}}s-{K}o-{R}ado theorem}, Forum of Math. Sigma, to appear.

\bibitem{BDDLS}
J.~{Balogh}, S.~{Das}, M.~{Delcourt}, H.~{Liu}, and M.~{Sharifzadeh},
  \emph{Intersecting families of discrete structures are typically trivial.},
  {J. Comb. Theory, Ser. A} \textbf{132} (2015), 224--245.

\bibitem{BMS}
J.~Balogh, R.~Morris, and W.~Samotij, \emph{Independent sets in hypergraphs},
  Journal of the American Mathematical Society \textbf{28} (2015), no.~3,
  669--709.

\bibitem{BNR}
B.~Bollob\'as, B.~Narayanan, and A.~Raigorodskii, \emph{On the stability of the
  {E}rd{\H{o}}s-{K}o-{R}ado theorem}, J. Combinatorial Theory Ser. A, to
  appear.

\bibitem{ConlonGowers}
D.~Conlon and W.T. Gowers, \emph{Combinatorial theorems in sparse random sets},
  submitted.

\bibitem{DasTran}
S.~Das and T.~Tran, \emph{Removal and stability for {E}rd{\H{o}}s-{K}o-{R}ado},
  SIAM J. Discrete Math., to appear.

\bibitem{DevlinKahn}
P.~Devlin and J.~Kahn, \emph{On stability in the {E}rd{\H{o}}s-{K}o-{R}ado
  theorem}, submitted.

\bibitem{EKR}
P.~Erd{\H{o}}s, C.~Ko, and R.~Rado, \emph{Intersection theorems for systems of
  finite sets}, The Quarterly Journal of Mathematics \textbf{12} (1961), no.~1,
  313--320.

\bibitem{Friedgut}
E.~Friedgut, \emph{On the measure of intersecting families, uniqueness and
  stability}, Combinatorica \textbf{28} (2008), no.~5, 503--528.

\bibitem{FriedgutRegev}
E.~Friedgut and O.~Regev, \emph{Manuscript}.

\bibitem{HammKahn1}
A.~Hamm and J.~Kahn, \emph{On {E}rd{\H{o}}s-{K}o-{R}ado for random hypergraphs
  {I}}, submitted.

\bibitem{HammKahn2}
\bysame, \emph{On {E}rd{\H{o}}s-{K}o-{R}ado for random hypergraphs {II}},
  submitted.

\bibitem{Hoffman}
A.J. Hoffman, \emph{On eigenvalues and colorings of graphs}, Graph Theory and
  its Applications (Proc. Advanced Sem., Math. Research Center, Univ. of
  Wisconsin), 1970, pp.~79--91.

\bibitem{JLR_randomgraphs}
S.~Janson, T.~{\L}uczak, and A.~Ruci\'nski, \emph{Random graphs},
  Wiley-Interscience Series in Discrete Mathematics and Optimization,
  Wiley-Interscience, New York, 2000.

\bibitem{KleitmanWinston}
D.J. Kleitman and K.J. Winston, \emph{On the number of graphs without
  4-cycles}, Discrete Mathematics \textbf{41} (1982), no.~2, 167--172.

\bibitem{KKS}
Y.~Kohayakawa, B.~Kreuter, and A.~Steger, \emph{An extremal problem for random
  graphs and the number of graphs with large even-girth}, Combinatorica
  \textbf{18} (1998), no.~1, 101--120.

\bibitem{KLRS}
Y.~Kohayakawa, S.J. Lee, V.~R\"{o}dl, and W.~Samotij, \emph{The number of sidon
  sets and the maximum size of sidon sets contained in a sparse random set of
  integers}, Random Structures \& Algorithms (2013).

\bibitem{Lovasz}
L.~Lov{\'a}sz, \emph{On the {S}hannon capacity of a graph}, IEEE Transactions
  on Information Theory \textbf{25} (1979), no.~1, 1--7.

\bibitem{DBLP:journals/rsa/Samotij14}
W.~Samotij, \emph{Stability results for random discrete structures}, Random
  Structures \& Algorithms \textbf{44} (2014), no.~3, 269--289.

\bibitem{SaxtonThomason}
D.~Saxton and A.~Thomason, \emph{Hypergraph containers}, Invent. Math.
  \textbf{201} (2015), no.~3, 925--992.

\bibitem{Schacht}
M.~Schacht, \emph{Extremal results for random discrete structures}, submitted.

\bibitem{Shearer}
J.B. Shearer, \emph{A note on the independence number of triangle-free graphs},
  Discrete Mathematics \textbf{46} (1983), no.~1, 83--87.

\end{thebibliography}

\end{document}